 \numberwithin{equation}{section}
 \definecolor{db}{rgb}{0.0,0.0,0.8} 
\definecolor{dg}{rgb}{0.0,0.55,0.14}
\definecolor{dr}{rgb}{0.5,0,0.07}
\newtheorem{theorem}{Theorem}[section]
\newtheorem{proposition}{Proposition}[section]
\newtheorem{lemma}[proposition]{Lemma}
\newtheorem{corollary}[proposition]{Corollary}
\theoremstyle{definition}
\theoremstyle{definition}
\theoremstyle{definition}
\theoremstyle{definition}
\theoremstyle{definition}
\theoremstyle{definition}
\newtheorem{remark}[theorem]{Remark}
\theoremstyle{definition}
\newtheorem{open-problem}{Open Problem}
\newcounter{step}
\newcommand{\rlemma}[1]{Lemma~\ref{#1}}
\newcommand{\rth}[1]{Theorem~\ref{#1}}
\newcommand{\rprop}[1]{Proposition~\ref{#1}}
\newcommand{\rsec}[1]{Section~\ref{#1}}
\def\be{\begin{equation}}
\def\ee{\end{equation}}
\def\bes{\begin{equation*}}
\def\ees{\end{equation*}}
\def\bt{\begin{theorem}}
\def\et{\end{theorem}}
\def\bpr{\begin{proposition}}
\def\epr{\end{proposition}}
\def\bl{\begin{lemma}}
\def\el{\end{lemma}}
\def\bc{\begin{corollary}}
\def\ec{\end{corollary}}
\def\br{\begin{remark}}
\def\er{\end{remark}}
\def\ben{\begin{enumerate}}
\def\bena{\begin{enumerate}[a)]}
\def\een{\end{enumerate}}
\def\bit{\begin{itemize}}
\def\iit{\end{itemize}}
\def\dist{\operatorname{dist}}
\def\deg{\operatorname{deg}}
\DeclareMathAlphabet{\mathonebb}{U}{bbold}{m}{n}
\def\R{{\mathbb R}}
\def\Z{{\mathbb Z}}
\def\fo{\forall\, }
\def\im{\imath}
\def\ve{\varepsilon}
\def\so{{\mathbb S}^1}
\def\sn{{\mathbb S}^N}
\newcommand{\wtu}{{\widetilde u}}
\newcommand{\wtw}{{\widetilde w}}
\newcommand{\wpp}{W^{1/p,p}}
\def\dist{\operatorname{dist}}
\def\deg{\operatorname{deg}}
\def\Dist{\operatorname{Dist}}
\newcommand{\wop}{W^{1/p,p}}
\newcommand{\woo}{W^{1,1}}
\def\moverlay{\mathpalette\mov@rlay}
\def\mov@rlay#1#2{\leavevmode\vtop{%
		\baselineskip\z@skip \lineskiplimit-\maxdimen
		\ialign{\hfil$\m@th#1##$\hfil\cr#2\crcr}}}
\newcommand{\charfusion}[3][\mathord]{
	#1{\ifx#1\mathop\vphantom{#2}\fi
		\mathpalette\mov@rlay{#2\cr#3}
	}
	\ifx#1\mathop\expandafter\displaylimits\fi}
\newcommand{\cupdot}{\charfusion[\mathbin]{\cup}{\cdot}}
\date{\today}
\title{On the distance between homotopy classes in
  $W^{1/p,p}(\so;\so)$}
\author{Itai Shafrir}
\affil{Department of Mathematics, Technion - I.I.T., 32 000 Haifa, Israel}
\newcommand\blfootnote[1]{%
  \begingroup
  \renewcommand\thefootnote{}\footnote{#1}%
  \addtocounter{footnote}{-1}%
  \endgroup
}
\begin{document}
\maketitle
\begin{abstract}
For every $p\in(1,\infty)$ there is a natural notion of topological
degree for maps in  $W^{1/p,p}(\so;\so)$ which allows us to write
that space as
a disjoint union of classes, $W^{1/p,p}(\so;\so)=\bigcup_{d\in\Z}\mathcal{E}_d$. For every pair
$d_1,d_2\in \Z$, we show that the distance $\Dist_{\wop}({\mathcal
  E}_{d_1}, {\mathcal E}_{d_2}):=\sup_{f\in{\mathcal E}_{d_1}}\
\inf_{g\in{\mathcal E}_{d_2}}\ d_{\wop}(f, g)$ equals the minimal
$\wop$-energy in $\mathcal{E}_{d_1-d_2}$. In the special case $p=2$ we 
deduce  from the latter formula an {\em explicit} value: $\Dist_{W^{1/2,2}}({\mathcal
  E}_{d_1}, {\mathcal E}_{d_2})=2\pi|d_2-d_1|^{1/2}$. 
\blfootnote{\emph{Keywords:} $\so$-valued maps, Fractional Sobolev spaces}
\blfootnote{\emph{2010 Mathematics Subject Classification.} Primary
  46E35}
\end{abstract}
\section{Introduction}
 For any $1<p<\infty$ consider the space $W^{1/p,p}(\so;\so)$
 consisting of the measurable functions $f:\so\to\R^2$ satisfying
 $f(x)\in\so$ a.e.~and
 \begin{equation}
\label{eq:18}
 |f|_{W^{1/p,p}}:=\left(\int_{\so}\int_{\so}\frac{|f(x)-f(y)|^p}{|x-y|^{2}}\, dxdy\right)^{1/p}<\infty.
 \end{equation}
Although the functions in $W^{1/p,p}(\so;\so)$ are not necessarily
continuous, a notion of topological degree does apply to maps in this
space, based on the density of $C^\infty(\so;\so)$ in
$W^{1/p,p}(\so;\so)$. This is a special case of the concept of  topological degree for maps in VMO, that was developed by  Brezis
and Nirenberg~\cite{vmo} (following a suggestion of L. Boutet de
Monvel and O. Gabber \cite[Appendix]{bgp}). It is natural to use this
degree to decompose the space into disjoint classes
$\{\mathcal{E}_d\}_{d\in\Z}$ and then to define the
\enquote{minimal energy} in each class, via the semi-norm in
\eqref{eq:18}, that is
\begin{equation}
   \sigma_{p}(d):=\inf_{f\in\mathcal{E}_{d}} |f|_{W^{1/p,p}}\,.
 \end{equation}
A lower bound for $\sigma_{p}(d)$ follows from the following result of
Bourgain, Brezis and Mironescu~\cite{bbm} who proved that there exists a positive constant $C_p$ such that 
 \begin{equation}
\label{eq:21}
   |\deg f|\leq C_p |f|^p_{W^{1/p,p}}\,,~\forall f\in W^{1/p,p}(\so;\so).
 \end{equation}
Therefore, 
\begin{equation}
  \label{eq:20}
  \sigma_{p}(d)\geq c_1(p)|d|^{1/p}\,,~\forall d\in\Z,
\end{equation}
 (with $c_1(p)=(1/C_p)^{1/p}$). In fact, a generalization of
 \eqref{eq:21} to the space  $W^{N/p,p}(\sn;\sn)$, $N\geq 2$, was also
 proved in \cite{bbm} (see  \cite{bbn,ngu} for refinements of this formula).

\par In the special case $p=2$ an explicit formula for $\sigma_2(d)$
is available, namely, 
 \begin{equation}
   \label{eq:mu2}
\sigma_2(d)=2\pi|d|^{1/2}\,.
 \end{equation}
 An easy way to establish \eqref{eq:mu2} is by using the expansion of
 $f\in W^{1/2,2}(\so;\so)$ to Fourier series,
 $f(e^{\im\theta})=\sum_{n=-\infty}^\infty a_ne^{\im n\theta}$.
 Indeed, combining the two well-known formulas (see e.g.~\cite{B1}):
 \begin{equation*}
   |f|^2_{W^{1/2,2}}=4\pi^2\sum_{n=-\infty}^\infty |n||a_n|^2~\text{ and
   }~\deg f=\sum_{n=-\infty}^\infty n|a_n|^2
 \end{equation*}
 yields the inequality $4\pi^2|\deg f|\leq |f|^2_{W^{1/2,2}}$, for every $f\in W^{1/2,2}(\so;\so)$,
 while equality occurs, e.g., for $f_d(z)=z^d$.
\par The distance function $\dist_{W^{1/p,p}}(f,g)=|f-g|_{W^{1/p,p}}$
induces two natural notions of distance between any pair of 
classes $\mathcal{E}_{d_1},\mathcal{E}_{d_2}$:
\begin{align}
\label{eq:23}
\dist_{\wop} ({\mathcal E}_{d_1}, {\mathcal E}_{d_2}):=\inf_{f\in{\mathcal E}_{d_1}}\ \inf_{g\in{\mathcal E}_{d_2}}\ d_{\wop}(f, g)\,,\\
\intertext{and}
\label{eq:22}
\Dist_{\wop}({\mathcal E}_{d_1}, {\mathcal E}_{d_2}):=\sup_{f\in{\mathcal E}_{d_1}}\ \inf_{g\in{\mathcal E}_{d_2}}\ d_{\wop}(f, g)\,.
\end{align}  
Both quantities in \eqref{eq:23}--\eqref{eq:22} were studied in
\cite{bms_coron}. Regarding $\dist_{\wop}$ the picture is completely
clear; it was shown in \cite{bms_coron} (by a similar argument to the
one used in \cite{vmo} in the case $p=2$) that $\dist_{\wop}({\mathcal E}_{d_1},
{\mathcal E}_{d_2})=0$ for all $d_1,d_2\in\Z$, for every
$p\in(1,\infty)$.  On the other hand, for $\Dist_\wop$ only partial
results were obtained. While the upper bound
\begin{equation}
  \label{eq:25}
  \Dist_{\wop}(\mathcal{E}_{d_1}, \mathcal{E}_{d_2})\leq
  c_2(p)|d_2-d_1|^{1/p},~\forall d_1,d_2\in\Z
\end{equation}
 was proved in \cite[Thm.~3, item 2]{bms_coron}, estimates for the
 lower bound were obtained only  under some restrictions on $p$ and/or
$d_1,d_2$. As an example, it was proved in \cite[Prop.~7.3]{bms_coron}
that  
\begin{equation}
  \label{eq:24}
  \Dist_{W^{1/2,2}}({\mathcal E}_{d_1}, {\mathcal
  E}_{d_2})=2\pi|d_2-d_1|^{1/2}
\end{equation}
 when $d_2>d_1\geq 0$. In the present
paper we give a precise formula for $\Dist_{\wop}({\mathcal E}_{d_1},
{\mathcal E}_{d_2})$, 
that in the special case $p=2$ yields the {\em explicit} formula
\eqref{eq:24} for {\em all} $d_1,d_2$. 
\begin{theorem}
\label{th:main}
  For every $p\in(1,\infty)$ and all $d_1,d_2\in\Z$ we have
  \begin{equation}
    \label{eq:Distp}
     \Dist_{W^{1/p,p}} ({\cal E}_{d_1}, {\cal
       E}_{d_2}) =\sigma_{p}(d_2-d_1)\,.
  \end{equation}
 In particular, there exist two positive constants $c_1(p)<c_2(p)$
 such that
 \begin{equation}
   \label{eq:27}
   c_1(p)|d_2-d_1|^{1/p}\leq \Dist_{W^{1/p,p}} ({\cal E}_{d_1}, {\cal
       E}_{d_2}) \leq c_2(p)|d_2-d_1|^{1/p},~\forall d_1,d_2\in\Z.
 \end{equation}
\end{theorem}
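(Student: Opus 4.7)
My approach is to prove \eqref{eq:Distp} by establishing the two inequalities $\Dist_{\wop}(\mathcal{E}_{d_1}, \mathcal{E}_{d_2}) \leq \sigma_p(d_2 - d_1)$ and $\geq \sigma_p(d_2 - d_1)$ separately (using $\sigma_p(d) = \sigma_p(-d)$, obtained by conjugation); the sandwich estimate \eqref{eq:27} then follows by combining \eqref{eq:20} with the matching upper bound $\sigma_p(d) \leq c_2(p) |d|^{1/p}$ obtained from a concentrated degree-$d$ test map (or deduced from \eqref{eq:25} via the identity). The central tool in both directions is the \emph{conformal invariance} of the Gagliardo seminorm $|\cdot|_{\wop}$ on $\so$: because $sp = 1 = \dim\so$, a direct change-of-variable computation shows $|u \circ \phi|_{\wop} = |u|_{\wop}$ and $\deg (u \circ \phi) = \deg u$ for every Möbius self-map $\phi$ of $\so$ and every $u \in \wop(\so;\C)$. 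Both bounds are obtained by \emph{bubbling}, i.e., pulling back a near-minimizer of $\sigma_p(k)$ by a sequence of Möbius maps concentrating onto a single point.

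For the upper bound I would fix $f \in \mathcal{E}_{d_1}$ and $\varepsilon > 0$, pick $h \in \mathcal{E}_{d_2 - d_1}$ with $|h|_{\wop} \leq \sigma_p(d_2 - d_1) + \varepsilon$ normalized so $h(\xi_0) = 1$ at some point $\xi_0 \in \so$, and choose a Lebesgue point $x_0$ of $f$. Take a one-parameter family of Möbius maps $T_\delta: \so \to \so$ that squeeze $\so \setminus B(x_0, \delta)$ onto a tiny arc around $\xi_0$, and set $h_\delta := h \circ T_\delta \in \mathcal{E}_{d_2 - d_1}$; conformal invariance gives $|h_\delta|_{\wop} = |h|_{\wop}$, while $h_\delta \to 1$ a.e.~as $\delta \to 0$. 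For $g_\delta := f h_\delta \in \mathcal{E}_{d_2}$, the pointwise decomposition
\begin{equation*}
f(x)(1 - h_\delta(x)) - f(y)(1 - h_\delta(y)) = f(y)\bigl(h_\delta(y) - h_\delta(x)\bigr) + \bigl(f(x) - f(y)\bigr)(1 - h_\delta(x)),
\end{equation*}
together with Minkowski's inequality in $L^p(\so \times \so,\, |x-y|^{-2}dxdy)$ and the dominated convergence theorem applied to the error term (dominated by the $L^1$ Gagliardo density of $f$ and tending to $0$ a.e.~since $1 - h_\delta \to 0$ a.e.), yields $\limsup_{\delta \to 0}|f - g_\delta|_{\wop} \leq |h|_{\wop} \leq \sigma_p(d_2 - d_1) + \varepsilon$; since $\varepsilon > 0$ and $f$ were arbitrary, the upper bound follows.

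For the lower bound, the plan is to exhibit a bubble sequence $f_n = \phi \circ S_n \in \mathcal{E}_{d_1}$, where $\phi$ is a near-minimizer of $\sigma_p(d_1)$ with $\phi(\xi_0) = 1$ and $S_n$ are Möbius maps of $\so$ with $S_n(x_1) = \xi_0$ concentrating $\so$ toward $x_1$; then $f_n(x_1) = 1$, $f_n \to 1$ a.e., and $|f_n|_{\wop}$ is bounded. For any $g \in \mathcal{E}_{d_2}$ the quotient $k := g\bar f_n \in \mathcal{E}_{d_2 - d_1}$ satisfies $|k|_{\wop} \geq \sigma_p(d_2 - d_1)$, and the identity $(g - f_n)\bar f_n = k - 1$ combined with the analogous product-rule Minkowski estimate gives
\begin{equation*}
\sigma_p(d_2 - d_1) \leq |k|_{\wop} \leq |g - f_n|_{\wop} + E_n(g),
\qquad E_n(g)^p := \int_{\so} |g - f_n|^p\, G_{f_n}(y)\,dy,
\end{equation*}
where $G_{f_n}(y) := \int_{\so} |f_n(x) - f_n(y)|^p |x-y|^{-2} dx$. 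Hence $|f_n - g|_{\wop} \geq \sigma_p(d_2 - d_1) - E_n(g)$, and the task reduces, via a contradiction argument, to showing $E_n(g_n) \to 0$ along any sequence $g_n$ with $|f_n - g_n|_{\wop}$ bounded. This is the main obstacle: $G_{f_n}$ becomes a Dirac-like measure concentrated at $x_1$ (total mass $\approx \sigma_p(d_1)^p$), so one needs the pointwise difference $|g_n - f_n|$ to vanish \emph{at} $x_1$. I expect this to follow from a concentration-compactness/bubbling analysis showing that an approximate infimizer $g_n$ must essentially duplicate the bubble of $f_n$ at $x_1$ (producing its extra $d_2 - d_1$ winding elsewhere), so that together with $f_n(x_1) = 1$ this forces $g_n(x_1) \to 1$ in an appropriate (quasicontinuous or trace) sense, making $E_n(g_n) \to 0$.
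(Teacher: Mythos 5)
Your upper bound is essentially sound and close in spirit to the paper's Proposition~\ref{prop:upper}: both use M\"obius concentration of a near-minimizer $h\in\mathcal{E}_{d_2-d_1}$; the paper makes the error term vanish identically (by taking $f\equiv 1$ and $h\equiv 1$ near suitable points, via density of maps constant near a point), while your Minkowski-plus-dominated-convergence argument also works, provided you take $h$ smooth (by density), since for a merely $\wop$ map the pointwise normalization $h(\xi_0)=1$ and the a.e.\ convergence $h\circ T_\delta\to 1$ are not meaningful. The deduction of \eqref{eq:27} is also fine.

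The lower bound, however, has a fatal structural gap, and it is caused by the very conformal invariance you rely on. If $f_n=\phi\circ S_n$ with $S_n$ M\"obius self-maps of $\so$, then for every $g\in\mathcal{E}_{d_2}$ the map $\tilde g:=g\circ S_n^{-1}$ is again in $\mathcal{E}_{d_2}$ and, since the measure $|x-y|^{-2}dx\,dy$ is M\"obius invariant for \emph{any} $\C$-valued integrand, $|f_n-g|_{\wop}=|\phi-\tilde g|_{\wop}$. Hence
\begin{equation*}
\inf_{g\in\mathcal{E}_{d_2}}|f_n-g|_{\wop}=\inf_{\tilde g\in\mathcal{E}_{d_2}}|\phi-\tilde g|_{\wop}\qquad\text{for every }n,
\end{equation*}
so bubbling gains nothing: your sequence can never push the inner infimum above what the single fixed map $\phi$ achieves, and likewise your error functional satisfies $E_n(\tilde g\circ S_n)=E_0(\tilde g)$, so there is no decay mechanism; proving $E_n(g_n)\to 0$ along near-minimizers is \emph{equivalent} to proving $\inf_{\tilde g\in\mathcal{E}_{d_2}}|\phi-\tilde g|_{\wop}\geq\sigma_p(d_2-d_1)$ for the fixed nice map $\phi$, which is the original problem and is left entirely to an unproven "concentration-compactness" expectation. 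Worse, that statement is most likely false for smooth $\phi$: a competitor can partially \emph{cancel} the winding of $\phi$ by retracing it backwards rather than inserting a concentrated bubble. (In the $W^{1,1}$ model this is explicit: for $f(e^{\im\theta})=e^{\im\theta}$ and the degree-zero competitor $g(e^{\im\theta})=e^{\im\theta}$ on $[0,\pi]$, $g(e^{\im\theta})=e^{\im(2\pi-\theta)}$ on $[\pi,2\pi]$, one gets $\int_{\so}|f'-g'|=4<2\pi$.) This cancellation phenomenon is exactly why the paper does \emph{not} use bubbles but the rapidly oscillating zig-zag maps $f_n=\widetilde T_n\circ z^{d_1}$ of \eqref{eq:taun}: Lemma~\ref{lem:elem} shows that wherever $\bar f_n g$ is not close to $\pm 1$, the difference quotient of $g-f_n$ must absorb the huge oscillation $n^{\alpha}$ of $f_n$, forcing that bad set to be small (estimates \eqref{eq:38}--\eqref{eq:claim}), and then the key Lemma~\ref{lem:main} converts alignment on the good set plus the degree jump into the full cost $\sigma_p(d_2-d_1)$. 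Your proposal contains no substitute for this mechanism, so the inequality $\Dist_{\wop}(\mathcal{E}_{d_1},\mathcal{E}_{d_2})\geq\sigma_p(d_2-d_1)$ is not established.
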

Formula \eqref{eq:27} provides a positive answer to Open Problem~2
from \cite{bms_coron} in the case of dimension $N=1$. It is an
immediate consequence of \eqref{eq:Distp}, \eqref{eq:20} and
\eqref{eq:25}. Note also that \eqref{eq:Distp} confirms the symmetry
property, $\Dist_{W^{1/p,p}} ({\cal E}_{d_1}, {\cal
       E}_{d_2})=\Dist_{W^{1/p,p}} ({\cal E}_{d_2}, {\cal
       E}_{d_1})$, which is not clear a priori from the definition
     \eqref{eq:22} (thus providing support for a positive answer to
     \cite[Open Problem~1]{bms_coron}).

\par In the case $p=2$ we obtain easily by combining \eqref{eq:Distp}
with \eqref{eq:mu2}:
\begin{corollary}
  \label{cor:p=2}
 We have
\begin{equation}
 \label{eq:29}
  \Dist_{W^{1/2,2}}({\mathcal E}_{d_1}, {\mathcal
  E}_{d_2})=2\pi|d_2-d_1|^{1/2},~\forall d_1,d_2\in\Z.
\end{equation}
\end{corollary}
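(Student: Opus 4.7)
The plan is to obtain \eqref{eq:29} as an immediate consequence of Theorem~\ref{th:main}, specialized to $p=2$, combined with the explicit value of $\sigma_2$ recorded in \eqref{eq:mu2} of the introduction.

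Concretely, I would first invoke \eqref{eq:Distp} with $p=2$, which gives, for every pair $d_1,d_2\in\Z$,
\begin{equation*}
  \Dist_{W^{1/2,2}}(\mathcal{E}_{d_1},\mathcal{E}_{d_2}) = \sigma_2(d_2-d_1).
\end{equation*}
I would then substitute \eqref{eq:mu2}, namely $\sigma_2(d)=2\pi|d|^{1/2}$, taken with $d=d_2-d_1$. Composing these two identities yields \eqref{eq:29} in one step.

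The only verification needed is that both ingredients genuinely apply to all integer pairs $(d_1,d_2)$, including signed values and the diagonal case $d_1=d_2$. For Theorem~\ref{th:main} this is built into its statement. For \eqref{eq:mu2}, the Fourier-series derivation recalled in the introduction is sign-symmetric: the inequality $|f|^2_{W^{1/2,2}}\geq 4\pi^2|\deg f|$ coming from $|f|^2_{W^{1/2,2}}=4\pi^2\sum_n |n||a_n|^2$ and $\deg f=\sum_n n|a_n|^2$ holds for every $f\in W^{1/2,2}(\so;\so)$, while equality is attained by $f_d(z)=z^d$ for every $d\in\Z$ (and trivially by constants for $d=0$). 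Thus $\sigma_2(d_2-d_1)=2\pi|d_2-d_1|^{1/2}$ regardless of the sign of $d_2-d_1$.

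There is no real obstacle here: the corollary merely packages two previously established results. The substantive work lies entirely in the proof of Theorem~\ref{th:main}, while the value of $\sigma_2$ is a classical Fourier computation. It is worth noting, however, that the partial result \eqref{eq:24} from \cite{bms_coron} was restricted to $d_2>d_1\geq 0$; the present corollary removes that restriction and, in passing, confirms the symmetry $\Dist_{W^{1/2,2}}(\mathcal{E}_{d_1},\mathcal{E}_{d_2})=\Dist_{W^{1/2,2}}(\mathcal{E}_{d_2},\mathcal{E}_{d_1})$ as a direct consequence of the symmetry of $|d_2-d_1|$.
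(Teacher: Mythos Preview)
Your proposal is correct and matches the paper's own justification exactly: the corollary is obtained by combining \eqref{eq:Distp} at $p=2$ with the explicit value \eqref{eq:mu2}. The additional remarks you make about sign-symmetry and the diagonal case are fine but not needed, since both ingredients are already stated for all $d\in\Z$.
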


\begin{remark}
  \label{rem:sigma}
  Using a similar argument to the one used in the proof of
  \rprop{prop:upper} below, it is easy to see that
  \begin{equation}
    \label{eq:11}
    \sigma_p^p(d)\leq |d|\sigma_p^p(1),~\forall d\in\Z. 
  \end{equation}
It follows that we may take $c_2(p)=\sigma_p(1)$ in
\eqref{eq:27}. While for $p=2$ equality holds in \eqref{eq:11} (by
\eqref{eq:mu2}), we do not know whether this is the case for other
values of $p$.
\end{remark}
The upper bound in \eqref{eq:Distp} is the easier assertion. It
follows from a slight modification of the argument used in the
proof of item {\it 2} of \cite[Theorem 3]{bms_coron}, that is, the
estimate \eqref{eq:25}.  
The proof of the lower bound in \eqref{eq:Distp} is much more involved; it
uses some arguments introduced in \cite{bms_w11} to prove a
lower bound for $\Dist_{\woo(\Omega;\so)}$ where $\Omega$ is either a
bounded domain in $\R^N$ or a smooth compact manifold, e.g.,
$\Omega=\so$ (for the special   case  $\woo(\so;\so)$, a
slightly different argument was used earlier in
\cite{bms_coron}). In particular, as in \cite{bms_coron,bms_w11} we make use of
\enquote{zig-zag}-type functions in order to construct functions in
$\mathcal{E}_{d_1}$ that are \enquote{relatively hard to approximate}
by functions in $\mathcal{E}_{d_2}$. This is the content of
\rprop{prop:Dist} below, whose proof requires some new tools
 due to the nonlocal character of the
$\wop$-energy.  In order to state it we need to introduce
some notation.
\par
 We start with  a notation for arcs in $\so$. For every $\alpha<\beta$ let
 \begin{equation}
   \label{eq:Arc}
 \mathcal{A}(\alpha,\beta)=\{e^{i\theta}\,;\,\theta\in(\alpha,\beta)\},\;\mathcal{A}(\alpha,\beta]=\{e^{i\theta}\,;\,\theta\in(\alpha,\beta]\}\text{
   and } \mathcal{A}[\alpha,\beta]=\{e^{i\theta}\,;\,\theta\in[\alpha,\beta]\}.
 \end{equation}
 For any $n\ge 1$ we divide $\so$ 
 to $2n$ arcs by setting 
\begin{equation}
\label{eq:1}
  I_{2j}=\mathcal{A}\bigl(2j\pi/n, (2j+1)\pi/n\bigr]\text{
    and } 
I_{2j+1}=\mathcal{A}\bigl((2j+1)\pi/n, (2j+2)\pi/n\bigr]\,,
\end{equation}
for $j=0,1,\ldots,n-1$.
Define
$\widetilde T_n=\widetilde T^{(\alpha)}_n\in\text{Lip}(\so;\so)$ with $\deg \widetilde T_n=1$ by $\widetilde
T_n(e^{\im \theta})=e^{\im
  \tau_n(\theta)}$, with $\tau_n$ defined on $[0,2\pi]$ by setting $\tau_n(0)=0$ and 
\begin{equation}
\label{eq:taun}
  \tau_n'(\theta)= \begin{cases} n^\alpha&
  \theta\in(2j\, \pi/n, (2j+1)\, \pi/n]\\
             -(n^\alpha-2)&
             \theta\in      ((2j+1)\, \pi/n, (2j+2)\, \pi/n]
  \end{cases},\ j=0,1,\ldots,n-1\,,
\end{equation}
 where $\alpha$ is any number satisfying 
 \begin{equation}
   \label{eq:alpha}
\begin{cases}
\alpha\in(1-1/p,1) &\text{ if }p\geq 2\\
\alpha\in(1/p,1) & \text{ if  }1<p<2
\end{cases}.
 \end{equation}
 We fix a value of $\alpha$ satisfying \eqref{eq:alpha}.
A useful property of $\widetilde T_n$ is
 \begin{equation}
   \label{eq:useful}
d_{\so}(x,\widetilde T_n(x))\leq \frac{\pi}{n^{1-\alpha}}\,,\quad x\in\so\,,
 \end{equation}
 where $d_{\so}$ denotes the geodesic distance in $\so$.
 The next proposition gives a partial analogue of
 \cite[Prop.~1.3]{bms_w11} to the $\wop$-setting.
\begin{proposition}
  \label{prop:Dist}
For any $d_1\ne0$ let $f(z)=z^{d_1}$ and
define for each
$n\ge 1$, $f_n(z)=\widetilde T_n\circ f\in\mathcal{E}_{d_1}$.
Then, for every $d_2\in\Z$ the
sequence $\{f_n\}$ satisfies 
\begin{equation}
\label{eq:like-sa7}
\lim_{n\to\infty} \inf_{g\in\mathcal{E}_{d_2}} d_{\wop}(f_n,g)=\sigma_p(d_2-d_1)\,.
\end{equation}
\end{proposition}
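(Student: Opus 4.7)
I approach \eqref{eq:like-sa7} via two matching inequalities.

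For the upper bound, $\limsup_n \inf_{g\in\mathcal{E}_{d_2}} d_{\wop}(f_n,g) \leq \sigma_p(d_2-d_1)$, I adapt the construction used for \eqref{eq:25}. Given $\varepsilon>0$, pick a near-minimizer $h^* \in \mathcal{E}_{d_2-d_1}$ with $|h^*|_{\wop} < \sigma_p(d_2-d_1)+\varepsilon$. Since the $\wop$-seminorm on $\so$ is M\"obius-invariant (an easy consequence of the identity $|\phi(x)-\phi(y)|^2 = |\phi'(x)||\phi'(y)||x-y|^2$ for a M\"obius $\phi:\so\to\so$), one may post-compose $h^*$ with a M\"obius map squeezing its non-trivial set into an arbitrarily short arc $J_{\delta_n}$, producing $h_n$ with $h_n\equiv 1$ outside $J_{\delta_n}$, the same seminorm, and still degree $d_2-d_1$. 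Setting $g_n := f_n\cdot h_n \in \mathcal{E}_{d_2}$ and using the identity
\[
 f_n(x)(1-h_n(x))-f_n(y)(1-h_n(y)) = f_n(x)(h_n(y)-h_n(x)) + (1-h_n(y))(f_n(x)-f_n(y))
\]
with Minkowski's inequality for $L^p(dxdy/|x-y|^2)$ yields
\[
 |f_n-g_n|_{\wop} \leq |h_n|_{\wop} + \Bigl(\int_{J_{\delta_n}} |1-h_n(y)|^p F_n(y)\,dy\Bigr)^{1/p},\qquad F_n(y):=\int\frac{|f_n(x)-f_n(y)|^p}{|x-y|^2}\,dx.
\]
As $F_n$ grows uniformly like $n^{p(\alpha-1)+1}$, choosing $\delta_n$ small enough (e.g.\ $\delta_n = n^{-A}$ with $A$ large) makes the remainder $o(1)$, so $\limsup_n |f_n-g_n|_{\wop} \leq |h^*|_{\wop} < \sigma_p(d_2-d_1)+\varepsilon$; letting $\varepsilon\to 0$ closes this half.

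For the lower bound, let $g_n \in \mathcal{E}_{d_2}$ be almost-minimizing, set $e_n := g_n-f_n$, and introduce
\[
 h_n := g_n\,\bar f_n \in \mathcal{E}_{d_2-d_1},\qquad h_n-1 = e_n\bar f_n.
\]
Since $\sigma_p(d_2-d_1) \leq |h_n|_{\wop}$, it suffices to establish a comparison of the type $\limsup_n |h_n|_{\wop} \leq \lim_n |e_n|_{\wop} = L$. Expanding
\[
 h_n(x)-h_n(y) = \bar f_n(y)\bigl(e_n(x)-e_n(y)\bigr) + e_n(x)\bigl(\bar f_n(x)-\bar f_n(y)\bigr),
\]
the crude Leibniz estimate again produces the cross term $\int |e_n|^p F_n$ whose weight blows up; thus one must keep the two Leibniz terms together and capture the genuine cancellation between them.

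The main obstacle, and the core of the proof, is precisely this cancellation estimate. My strategy is to decompose $\bar f_n = \bar f + (\bar f_n - \bar f)$ and recast $h_n-1 = e_n\bar f + e_n(\bar f_n-\bar f)$, treating each piece separately: multiplication by the smooth $\bar f = z^{-d_1}$ is bounded on $W^{1/p,p}$ and gives $|e_n\bar f|_{\wop} \leq |e_n|_{\wop} + O(\|e_n\|_{L^p})$, while the error $e_n(\bar f_n-\bar f)$ must be shown to have $\wop$-seminorm tending to zero. The tension here is that $\|\bar f_n-\bar f\|_{L^\infty} = O(n^{-(1-\alpha)})\to 0$ by \eqref{eq:useful}, yet $|\bar f_n-\bar f|_{\wop}$ itself blows up, which is what makes the direct product rule wasteful. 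I expect the resolution to proceed by slicing $\so$ into the $2n$ zig-zag arcs \eqref{eq:1} and splitting the $\wop$ double integral into a diagonal region $|x-y|\lesssim n^{-1}$, where near-linearity of $f_n$ on each arc combined with the Rellich compactness of $e_n$ in $L^q$ ($q<\infty$) forces pointwise matching of the two Leibniz terms, and an off-diagonal region, controlled by the $L^\infty$-smallness of $\bar f_n-\bar f$ against the smooth background $\bar f$. Once the comparison $|h_n|_{\wop} \leq L + o(1)$ is secured, extracting a weakly convergent subsequence of $\{h_n\}$ in $W^{1/p,p}$ produces a limit $h_\infty \in \mathcal{E}_{d_2-d_1}$ (degree being preserved under weak convergence via the VMO-degree framework) with $|h_\infty|_{\wop} \leq L$, yielding $\sigma_p(d_2-d_1) \leq L$.
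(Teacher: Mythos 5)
Your upper--bound half is fine: it is essentially the argument of \rprop{prop:upper} (M\"obius squeezing of a near minimizer of $\sigma_p(d_2-d_1)$ onto a tiny arc), carried out concretely for $f_n$, and the estimate of the remainder via $\sup_y F_n(y)=O\bigl(n^{1+p(\alpha-1)}\bigr)$ is correct.

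The lower bound, however, has a genuine gap, and it sits exactly at the step you flag as ``the core of the proof''. Your plan is to prove $\limsup_n|h_n|_{\wop}\le \lim_n|e_n|_{\wop}$ with $h_n=g_n\bar f_n$, and your route is the splitting $\bar f_n=\bar f+(\bar f_n-\bar f)$. Neither piece can be controlled with the information available. For the smooth piece, the Leibniz error is $\bigl(\int_{\so}|e_n(y)|^p\,G(y)\,dy\bigr)^{1/p}$ with $G$ bounded, i.e.\ $O(\|e_n\|_{L^p})$; but $\|e_n\|_{L^p}$ is \emph{not} $o(1)$ for an arbitrary almost-minimizing competitor ($|e_n|$ can be of size $2$ on the ``antipodal'' set $\{g_n\bar f_n\approx-1\}$), so already here you lose a fixed amount, fatal when the exact constant $\sigma_p(d_2-d_1)$ is at stake. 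For the oscillatory piece, since $|h_n-1|=|e_n|$ pointwise, the relevant cross term is $\int_{\so}|e_n(y)|^pF_n(y)\,dy$ with $F_n\sim n^{1+p(\alpha-1)}\to\infty$ \emph{uniformly} in $y$; the $L^\infty$-smallness of $\bar f_n-\bar f$ and Rellich compactness of $e_n$ do not touch this. The only a priori information, \eqref{eq:2}, forces (via arguments of the type \eqref{eq:38}--\eqref{eq:41}) the set where $g_n\bar f_n$ is far from $+1$ to have measure $O(n^{1-1/p-\alpha})$, but a set of exactly that size on which $|e_n|\approx 2$ gives a cross term of order $n^{1-1/p-\alpha}\cdot n^{1+p(\alpha-1)}$ (e.g.\ $n^{\alpha-1/2}$ for $p=2$), which diverges. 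So the comparison $|h_n|_{\wop}\le|e_n|_{\wop}+o(1)$ is not established, and there is no evident mechanism in your sketch to establish it; the antipodal region is the real enemy and your outline never addresses it.

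This is precisely why the paper proceeds differently: it never divides by the oscillatory $f_n$. It forms $W=\bar f\,(g-f_n)+1$, so that every product-rule error carries differences of the \emph{smooth} map $f$ (bounded energy), weighted by $|1-\widetilde w|$ which is small on the coincidence set; it then normalizes to $\widetilde W$ and post-composes with the collapsing map $K_{7\varepsilon}$ of \eqref{eq:Keps}, producing a degree-$(d_2-d_1)$ test map that is \emph{constant} near both $+1$ and $-1$, so no energy at all is needed on the coincidence and antipodal regions (this is what replaces your missing cancellation); finally the residual set, where the phase of $g\bar f_n$ is away from $\pm1$, is shown to be small by combining \rlemma{lem:elem} with \eqref{eq:2} and the elementary inequalities \eqref{eq:IAB}, \eqref{eq:ineq-beta}, and only the smooth $f$'s energy is paid there. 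A secondary remark: your closing weak-compactness step is both unnecessary (you already have $\sigma_p(d_2-d_1)\le|h_n|_{\wop}$ since $h_n\in\mathcal{E}_{d_2-d_1}$) and incorrect as stated, since degree is not preserved under weak $W^{1/p,p}$ convergence (M\"obius-type concentration loses degree in the weak limit).
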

It is clear that \rprop{prop:Dist} implies the inequality
\enquote{$\ge$} in \eqref{eq:Distp} when $d_1\neq 0$ (as we shall see
in Section~\ref{sec:proofs} below, the case $d_1=0$ is trivial).
\par 
The paper is organized as follows. In \rsec{sec:prel} we prove
some technical results needed for the proof of our main results. 
\rsec{sec:key-lemma} is devoted to the proof of a key lemma, essential
to the proof of 
\rprop{prop:Dist}. Finally, the proofs of \rprop{prop:Dist}
and \rth{th:main} are given in \rsec{sec:proofs}.
\subsubsection*{Acknowledgments.} The author is grateful to the anonymous
referee for many helpful comments  and
especially for his suggestions for the proofs of \eqref{eq:IAB} and
\eqref{eq:ineq-beta}, that are considerably simpler and more
elementary than the original ones.  The author is indebted to 
Haim Brezis and Petru Mironescu for many interesting discussions on
the problem studied in this paper. The research 
was supported by  the Israel Science Foundation (Grant
No. 999/13). Part of this work was done while the author was visiting the
University Claude Bernard Lyon 1. He thanks the Mathematics Department
for its hospitality. 
\section{Preliminaries}
\label{sec:prel}
  We recall the following elementary result (see \cite[Lemma~5.2]{bms_w11}):
\begin{lemma}
  \label{lem:ineqs1}
 Let $z_1$ and $z_2$ be two points in $\so$ satisfying, for some $\varepsilon\in(0,\pi/2)$,
 \begin{equation}
   \label{eq:55}
 d_{\so}(z_1,z_2)\in(\varepsilon,\pi-\varepsilon).
 \end{equation}
 If the vectors $v_1,v_2\in\R^2$ satisfy 
 \begin{equation}
   \label{eq:54}
v_j\perp z_j,\,j=1,2,
 \end{equation}
then
\begin{equation}
  \label{eq:52}
|v_1-v_2|\geq (\sin\varepsilon) |v_j|,~j=1,2.
\end{equation}
\end{lemma}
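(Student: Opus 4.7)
My plan is to reduce the statement to a one-parameter computation by parametrizing the perpendicularity condition. Since $v_j\perp z_j$ and $z_j\in\so$, I will write $v_j=\lambda_j z_j^\perp$, where $z_j^\perp$ denotes the counterclockwise $90^\circ$ rotation of $z_j$, and $\lambda_j\in\R$. Set $\theta:=d_{\so}(z_1,z_2)$; since rotation by $90^\circ$ preserves the angle between unit vectors, we have $\la z_1^\perp,z_2^\perp\ra=\cos\theta$, which gives
\begin{equation*}
|v_1-v_2|^2=\lambda_1^2+\lambda_2^2-2\lambda_1\lambda_2\cos\theta.
\end{equation*}
The desired estimate $|v_1-v_2|\geq (\sin\varepsilon)|v_j|$ (take $j=1$ by symmetry) is then equivalent to the scalar inequality $\lambda_2^2-2\lambda_1\lambda_2\cos\theta+\cos^2\varepsilon\,\lambda_1^2\geq 0$, using $1-\sin^2\varepsilon=\cos^2\varepsilon$.

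To finish, I complete the square in $\lambda_2$:
\begin{equation*}
\lambda_2^2-2\lambda_1\lambda_2\cos\theta+\cos^2\varepsilon\,\lambda_1^2=(\lambda_2-\lambda_1\cos\theta)^2+\lambda_1^2(\cos^2\varepsilon-\cos^2\theta).
\end{equation*}
The hypothesis $\theta\in(\varepsilon,\pi-\varepsilon)$ is exactly what guarantees $|\cos\theta|\leq\cos\varepsilon$, so $\cos^2\varepsilon-\cos^2\theta\geq 0$ and both summands are nonnegative. The same argument with the roles of $\lambda_1,\lambda_2$ swapped gives the inequality for $j=2$. There is no genuine obstacle here; the only point to watch is the translation of the geodesic-distance hypothesis into the sign condition on $\cos^2\varepsilon-\cos^2\theta$, which is precisely why the interval $(\varepsilon,\pi-\varepsilon)$ (rather than, say, $(\varepsilon,\pi)$) appears in the statement. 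Alternatively, a purely geometric proof proceeds by noting that the angle between $v_1$ and $v_2$ equals $\theta$ modulo $\pi$, so the perpendicular distance from $v_j$ to the line spanned by the other vector is $|v_j|\sin\theta\geq|v_j|\sin\varepsilon$, and this quantity is bounded above by $|v_1-v_2|$.
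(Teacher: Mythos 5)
Your proof is correct. Note that the paper itself gives no proof of this lemma: it is simply recalled from \cite[Lemma~5.2]{bms_w11}, so there is no argument in the text to compare against, and what you have produced is a genuinely self-contained elementary verification. Your main route --- writing $v_j=\lambda_j z_j^{\perp}$, using that rotation by $90^{\circ}$ preserves inner products so that $\la z_1^{\perp},z_2^{\perp}\ra=\cos\theta$ with $\theta=d_{\so}(z_1,z_2)$, and then completing the square to reduce \eqref{eq:52} to $\cos^2\varepsilon-\cos^2\theta\ge 0$, which is exactly what \eqref{eq:55} guarantees --- is sound in every step, and it correctly isolates why the two-sided restriction $\theta\in(\varepsilon,\pi-\varepsilon)$ (excluding near-antipodal as well as nearby points) is needed. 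Your alternative geometric argument (the angle between the lines $\R v_1$ and $\R v_2$ is $\theta$ modulo $\pi$, so $|v_1-v_2|\ge\dist(v_1,\R v_2)=|v_1|\sin\theta\ge|v_1|\sin\varepsilon$) is essentially a formalization of the informal explanation the paper gives right after the statement, about tangent vectors at non-close, non-antipodal points being unable to be almost parallel; its only micro-caveat is the degenerate case where one of the $v_j$ vanishes (then ``the line spanned by the other vector'' is not a line), which is trivial and in any case covered by your algebraic proof.
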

 The intuition beyond the above result is quite simple. Informally
 speaking, if the points
 $z_1,z_2\in\so$ are neither close to each other nor close to being
 antipodal points, then it is impossible for a pair of    
 nonzero vectors, $v_1$ and $v_2$, in the tangent spaces of $\so$ at
 $z_1$ and $z_2$, respectively, to be \enquote{almost parallel}  to each other.
The next lemma  can
be viewed as a \enquote{discrete} version of \rlemma{lem:ineqs1}, where
tangent vectors are replaced by chords.
\begin{lemma}
  \label{lem:elem}
 For any $\varepsilon\in(0,\pi/2)$ and  every four points
 $z_1,z_2,w_1,w_2\in\so$ such that
 \begin{equation*}
\text{either }
z_1\overline{w}_1,z_2\overline{w}_2\in\mathcal{A}(\varepsilon,\pi-\varepsilon)\quad\text{or}\quad z_1\overline{w}_1,z_2\overline{w}_2\in\mathcal{A}(\pi+\varepsilon,2\pi-\varepsilon)\,,
 \end{equation*}
we have:
 \begin{equation}
   \label{eq:elem}
    |(z_1-w_1)-(z_2-w_2)|^2\geq (\sin^2\varepsilon)\max\left\{|z_1-z_2|^2,|w_1-w_2|^2\right\}\,.
 \end{equation}
\end{lemma}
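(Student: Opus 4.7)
The plan is to reduce the assertion to a completion-of-squares identity by passing to angular coordinates. I would parametrize $z_j = e^{\im\theta_j}$ and $w_j = e^{\im\phi_j}$, choosing representatives so that $\gamma_j := \theta_j-\phi_j$ lies in $(\varepsilon,\pi-\varepsilon)$ in the first case of the hypothesis and in $(\pi+\varepsilon,2\pi-\varepsilon)$ in the second (this is exactly what the condition on $z_j\bar w_j$ supplies). Applying the elementary identity $e^{\im a}-e^{\im b}=2\im\sin\!\bigl((a-b)/2\bigr)e^{\im(a+b)/2}$ to both $z_1-z_2$ and $w_1-w_2$ and expanding, the quantity to be estimated becomes
\begin{equation*}
|(z_1-w_1)-(z_2-w_2)|^2 = |(z_1-z_2)-(w_1-w_2)|^2 = 4\bigl(s^2+t^2-2stc\bigr),
\end{equation*}
where $s=\sin\!\bigl((\theta_1-\theta_2)/2\bigr)$, $t=\sin\!\bigl((\phi_1-\phi_2)/2\bigr)$, and $c=\cos(\alpha-\beta)$ with $\alpha=(\theta_1+\theta_2)/2$, $\beta=(\phi_1+\phi_2)/2$. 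Since $|z_1-z_2|^2=4s^2$ and $|w_1-w_2|^2=4t^2$, the target inequality reduces to $s^2+t^2-2stc\ge\sin^2\varepsilon\cdot\max\{s^2,t^2\}$.

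The crucial numerical input I would then extract is that $\alpha-\beta=(\gamma_1+\gamma_2)/2$ lies in $(\varepsilon,\pi-\varepsilon)$ (respectively in $(\pi+\varepsilon,2\pi-\varepsilon)$), so in either case $|c|<\cos\varepsilon$. This gives $-2stc \ge -2|s||t|\cos\varepsilon$, so matters come down to showing
\begin{equation*}
s^2+t^2-2|s||t|\cos\varepsilon \;\ge\; \sin^2\varepsilon\cdot\max\{s^2,t^2\}.
\end{equation*}
Assuming without loss of generality $|s|\ge|t|$, substituting $\sin^2\varepsilon=1-\cos^2\varepsilon$ and rearranging transforms the desired bound into the trivial inequality $(|t|-|s|\cos\varepsilon)^2\ge 0$; the case $|t|\ge|s|$ is symmetric and yields the $|w_1-w_2|^2$ branch of the maximum.

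I do not foresee any substantive obstacle: once one passes to polar coordinates the estimate is essentially forced. The only point requiring mild care is checking that the "midpoint" angular difference $\alpha-\beta$ inherits the interval of the $\gamma_j$'s — this is immediate from the two intervals in the hypothesis being convex arcs, and it ensures the uniform bound $|c|<\cos\varepsilon$ that drives the completion of squares.
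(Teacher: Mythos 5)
Your proposal is correct and follows essentially the same route as the paper: both pass to angular coordinates, use the identity $e^{\im a}-e^{\im b}=2\im\sin\bigl((a-b)/2\bigr)e^{\im(a+b)/2}$, observe that the "midpoint" phase difference $(\gamma_1+\gamma_2)/2$ stays in the arc of the hypothesis so the cross term is controlled by $\cos\varepsilon$, and finish by the completion of squares $a^2+b^2-2ab\cos\varepsilon\geq(\sin^2\varepsilon)\max\{a^2,b^2\}$. The only cosmetic differences are that you treat both arcs explicitly (the paper reduces to one by symmetry) and you spell out the final square-completion step which the paper leaves implicit.
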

\begin{proof}
Without loss of generality assume that $z_1\overline{w}_1,z_2\overline{w}_2\in\mathcal{A}(\varepsilon,\pi-\varepsilon)$ and   write $z_j=e^{\im \varphi_j}$ and $w_j=e^{\im \psi_j}$ with
  $\varphi_j-\psi_j\in(\varepsilon,\pi-\varepsilon)$, $j=1,2$. We may
  also assume that $z_1\ne z_2$ and $w_1\ne w_2$; otherwise the result
  is clear.
We have
\begin{align*}
  z_1-z_2&=e^{\im\varphi_1}-e^{\im\varphi_2}=2\im\sin\left(\frac{\varphi_1-\varphi_2}{2}\right)
e^{\im(\varphi_1+\varphi_2)/2}\,,\\
w_1-w_2&=e^{\im\psi_1}-e^{\im\psi_2}=2\im\sin\left(\frac{\psi_1-\psi_2}{2}\right)
e^{\im(\psi_1+\psi_2)/2}\,.
\end{align*}
Therefore,
\begin{equation}
\label{eq:prod}
  (z_1-z_2)\cdot\overline{w_1-w_2}=|z_1-z_2||w_1-w_2|
\tau \exp\im\big((\varphi_1-\psi_1)/2+(\varphi_2-\psi_2)/2\big)\,,
\end{equation}
 with $\tau\in\{-1,1\}$. 
Since by our assumption
$(\varphi_1-\psi_1)/2+(\varphi_2-\psi_2)/2\in(\varepsilon,\pi-\varepsilon)$,
 we get from \eqref{eq:prod} that an argument of
 $(z_1-z_2)\cdot\overline{w_1-w_2}$ lies in either the interval 
 $(\varepsilon,\pi-\varepsilon)$ (if $\tau=1$) or
 $(\pi+\varepsilon,2\pi-\varepsilon)$ (if $\tau=-1$). In any case, an
 argument lies in $(\varepsilon,2\pi-\varepsilon)$, whence
 \begin{equation*}
    |(z_1-w_1)-(z_2-w_2)|^2\geq |z_1-z_2|^2+|w_1-w_2|^2-2(\cos\varepsilon)|z_1-z_2||w_1-w_2|\,,
 \end{equation*}
 and \eqref{eq:elem} follows.
\end{proof}

\par
We will also   need the following result about Lipschitz self-maps of $\so$.
\begin{lemma}
  \label{lem:lip}
Let $k\in \text{Lip}[0,2\pi]$ with Lipschitz constant $L$ such that
$k(0)=k(2\pi)$. Define $K:\so\to\so$ by 
  $K(e^{\im \theta})=e^{\im k(\theta)}$, $\theta\in[0,2\pi]$.
Then,
 \begin{equation}
   \label{eq:47}
 \|K\|_{\text{Lip}}:=\sup_{\substack{x,y\in\so\\x\ne y}}
\frac{|K(x)-K(y)|}{|x-y|}\leq\max\{1,L\}\,.
 \end{equation}
\end{lemma}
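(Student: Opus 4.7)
The plan is to reduce the claim to a sharp trigonometric inequality via the chord--arc formula on $\so$. Given $x, y \in \so$, I would choose $\alpha, \beta \in [0, 2\pi]$ with $x = e^{\im\alpha}$, $y = e^{\im\beta}$, and, without loss of generality, $\alpha \le \beta$. The chord formulas read $|x - y| = 2\sin((\beta-\alpha)/2)$ and $|K(x) - K(y)| = 2|\sin((k(\beta)-k(\alpha))/2)|$. Using $\sin(\pi - t) = \sin(t)$, we have $|x - y| = 2\sin(s/2)$ where $s := \min\{\beta - \alpha,\, 2\pi - (\beta - \alpha)\} \in [0, \pi]$ is the geodesic arc length between $x$ and $y$ on $\so$. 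Setting $h := k(\beta) - k(\alpha)$, the claim \eqref{eq:47} reduces to
\[
|\sin(h/2)| \le \max\{1,L\}\,\sin(s/2).
\]

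The next step is to show $|h| \le Ls$. If $\beta - \alpha \le \pi$, this is the Lipschitz bound on $k$ applied directly. If $\beta - \alpha > \pi$, the hypothesis $k(0) = k(2\pi)$ enters essentially: writing $h = \bigl(k(\beta) - k(2\pi)\bigr) + \bigl(k(0) - k(\alpha)\bigr)$ and applying the Lipschitz bound to each term gives $|h| \le L(2\pi - \beta) + L\alpha = L\bigl(2\pi - (\beta - \alpha)\bigr) = Ls$.

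For the trigonometric inequality itself, I observe that $|\sin(\cdot/2)|$ is $2\pi$-periodic, even, and attains its maximum at $\pm\pi$, so
\[
\sup_{|h| \le Ls} |\sin(h/2)| = \sin(\min\{Ls, \pi\}/2).
\]
I would then split into two cases. If $Ls \le \pi$, one needs $\sin(Ls/2) \le \max\{1,L\}\sin(s/2)$: for $L \le 1$ this is monotonicity of $\sin$ on $[0,\pi/2]$, while for $L>1$ it follows from the elementary inequality $\sin(Lu) \le L\sin(u)$ for $u, Lu \in [0, \pi/2]$, proved by differentiating $f(u) = L\sin u - \sin(Lu)$ (one has $f(0) = 0$ and $f'(u) = L(\cos u - \cos(Lu)) \ge 0$ since $\cos$ is decreasing on $[0,\pi/2]$). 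If $Ls > \pi$ (which forces $L > 1$), the sup equals $1$ and we need $L\sin(s/2) \ge 1$; since $s > \pi/L$, Jordan's inequality $\sin t \ge (2/\pi)t$ on $[0,\pi/2]$ yields $\sin(s/2) > \sin(\pi/(2L)) \ge 1/L$, as required.

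The only subtle point is the case $\beta - \alpha > \pi$ in the intermediate step: without the closing condition $k(0) = k(2\pi)$, the naive bound $|h| \le L(\beta-\alpha)$ would be far too weak (it blows up as $\beta - \alpha \to 2\pi$ while $|x-y| \to 0$). The rest is routine trigonometry once the reduction is in place.
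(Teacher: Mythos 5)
Your proof is correct and follows essentially the same route as the paper: reduce to the chord ratio $|\sin(h/2)|/\sin(s/2)$ and split according to whether $Ls\leq\pi$ or $Ls>\pi$, using $\sin(Lu)\leq L\sin u$ on $[0,\pi/2]$ in the first case. The only (minor) differences are that you make explicit the step $|h|\leq Ls$ via $k(0)=k(2\pi)$ when $\beta-\alpha>\pi$, which the paper leaves implicit in its passage to the supremum over $t\in(0,\pi/2]$, and that you use Jordan's inequality where the paper invokes monotonicity of $L\sin\bigl(\pi/(2L)\bigr)$.
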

\begin{proof}
  For any pair $\theta_1\ne \theta_2$ in $[0,2\pi)$ we have
  \begin{equation}
\label{eq:4}
    \frac{|K(e^{\im \theta_2})-K(e^{\im \theta_1})|}{|e^{\im
        \theta_2}-e^{\im \theta_1}|}=
\left|\frac{\sin\big((k(\theta_2)-k(\theta_1))/2\big)}{\sin\big((\theta_2-\theta_1)/2\big)}\right|
\leq \sup\left\{\frac{|\sin\theta|}{\sin t}\,;\,
  t\in(0,\pi/2],\,|\theta|\leq Lt\right\}.
  \end{equation}
 Fix any $t\in(0,\pi/2]$. We distinguish two cases: either $Lt\leq\pi/2$ or $Lt>\pi/2$. In the
 first case we have 
 \begin{equation}
\label{eq:48}
   \sup\left\{\frac{|\sin\theta|}{\sin t}\,;\,
  |\theta|\leq Lt\right\}=\frac{\sin(Lt)}{\sin t}\leq\max\{L,1\}.
 \end{equation}
 Indeed, if $L\le 1$ then clearly $\sin(Lt)\leq\sin(t)$. On the other hand, if $L>1$ then we use the fact that the function $g(t)=\sin(Lt)-L\sin t$ satisfies $g(0)=0$ and $g'(t)=L(\cos(Lt)-\cos t)\leq 0$ for $0\leq t\leq Lt\leq \pi/2$. 
 In the second case (where we must have $L>1$),
\begin{equation}
\label{eq:49}
   \sup\left\{\frac{|\sin\theta|}{\sin t}\,;\,
  |\theta|\leq Lt\right\}=\frac{1}{\sin t}<\frac{1}{\sin\big(\pi/(2L)\big)}<L,
 \end{equation}
 where the last inequality follows from the easily verified fact that
 the function $h(L):=L \sin\big(\pi/(2L)\big)$ satisfies $h(1)=1$ and
 $h'(L)>0$ on $[1,\infty)$. The conclusion \eqref{eq:47} clearly
 follows from \eqref{eq:48}--\eqref{eq:49}.
\end{proof}

\section{A key lemma}
\label{sec:key-lemma}
 It will be useful to introduce the following notation for $f\in
 W^{1/p,p}(\so;\so)$ and $A\subset\so\times\so$,
 \begin{equation*}
  E_p(f;A):=\iint_{A}\frac{|f(x)-f(y)|^p}{|x-y|^{2}}\, dxdy\,,
 \end{equation*}
  so in particular $E_p(f;\so\times\so)=|f|^p_{W^{1/p,p}}$.
\par The next lemma is the main  ingredient in the proof of \rprop{prop:Dist}.
\begin{lemma}
  \label{lem:main}
 Let $u,{\widetilde u},v\in \wpp(\so;\so)\cap C(\so;\so)$,
 $\varepsilon\in(0,\pi/20)$,
 and 
\begin{equation}
   \label{eq:83}
   \begin{aligned}
     C_\varepsilon^{+}&=\{x\in\so;\,  (v/\wtu)(x)\in\mathcal{A}[-\varepsilon,\varepsilon]\}\,,\\
 C_\varepsilon^{-}&=\{x\in\so;\,
 (v/\wtu)(x)\in\mathcal{A}[\pi-\varepsilon,\pi+\varepsilon]\}\,,\\
C_\varepsilon&=C_\varepsilon^{+}\cup C_\varepsilon^{-}\,,\\
D_\varepsilon&=\so\times\so\setminus\big( (C_\varepsilon^{+}\times
  C_\varepsilon^{+})\cup (C_\varepsilon^{-}\times
  C_\varepsilon^{-})\big)\,.
   \end{aligned}
 \end{equation}
Assume that 
\begin{equation}
\label{eq:14}
|u(x)-\widetilde u(x)|\leq \ve,\  \fo x\in\so,
\end{equation}
and let $\deg(u)=d_1$, $\deg(v)=d_2$.
Then, for some constant $c_1=c_1(p)>0$ we have, for $\varepsilon\leq\varepsilon_0(p)$,
\begin{equation}
\label{eq:34}
\begin{aligned}
E_p(v-\wtu;D_\varepsilon)\geq 
(1-c_1\ve^{1/2}) \sigma^p_p(d_2-d_1)- &c_1\varepsilon^{-p/2}
E_p(u;(\so\setminus C_\varepsilon)\times\so)\\
 -&c_1\varepsilon^{p/2} E_p(u;\so\times\so).
\end{aligned}
\end{equation}
\end{lemma}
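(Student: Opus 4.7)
The plan is to reduce the lower bound \eqref{eq:34} to the inequality $|\phi|^p_{\wpp}\ge\sigma_p^p(d_2-d_1)$ applied to a carefully chosen test map $\phi\in W^{1/p,p}(\so;\so)$ of degree $d_2-d_1$. The natural first candidate is $\phi_0:=v\,\overline{\widetilde u}$, which has $\deg\phi_0=d_2-d_1$ since \eqref{eq:14} with $\varepsilon<\pi/20$ forces $\deg\widetilde u=\deg u=d_1$. However, a direct application of \rlemma{lem:elem} to $\phi_0$ would produce only the weak factor $\sin^p\varepsilon\sim\varepsilon^p$, much worse than the target $(1-c_1\varepsilon^{1/2})$. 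To fix this, I would work instead with the collapsed map $\phi:=\pi\circ\phi_0$, where $\pi:\so\to\so$ is a degree-$1$ Lipschitz self-map (with Lipschitz constant independent of $\varepsilon$) that sends $\mathcal{A}[-\varepsilon,\varepsilon]$ to $\{1\}$ and $\mathcal{A}[\pi-\varepsilon,\pi+\varepsilon]$ to $\{-1\}$, and coincides with the identity outside slightly larger neighborhoods. Then $\deg\phi=d_2-d_1$ and $\phi$ is constant on each of $C_\varepsilon^\pm$, so the integrand of $|\phi|^p_{\wpp}$ vanishes on $(C_\varepsilon^+\times C_\varepsilon^+)\cup(C_\varepsilon^-\times C_\varepsilon^-)$, yielding
\begin{equation*}
\sigma_p^p(d_2-d_1)\le |\phi|^p_{\wpp}= E_p(\phi;D_\varepsilon).
\end{equation*}

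The main work is then a pointwise comparison, on $D_\varepsilon$, of $|\phi(x)-\phi(y)|^p$ against $|(v-\widetilde u)(x)-(v-\widetilde u)(y)|^p$. Write $\so\setminus C_\varepsilon=G_\varepsilon^+\cup G_\varepsilon^-$, with $G_\varepsilon^+:=\{x:(v/\widetilde u)(x)\in\mathcal{A}(\varepsilon,\pi-\varepsilon)\}$ and $G_\varepsilon^-:=\{x:(v/\widetilde u)(x)\in\mathcal{A}(\pi+\varepsilon,2\pi-\varepsilon)\}$. On bulk pairs in $(G_\varepsilon^+\times G_\varepsilon^+)\cup(G_\varepsilon^-\times G_\varepsilon^-)$, \rlemma{lem:elem} applied with $z_j=v(x_j)$ and $w_j=\widetilde u(x_j)$ gives
\begin{equation*}
|(v-\widetilde u)(x)-(v-\widetilde u)(y)|^2\ge(\sin^2\varepsilon)\max\{|v(x)-v(y)|^2,|\widetilde u(x)-\widetilde u(y)|^2\},
\end{equation*}
and, together with the product-rule bound $|\phi_0(x)-\phi_0(y)|\le|v(x)-v(y)|+|\widetilde u(x)-\widetilde u(y)|$ and Lipschitzness of $\pi$, this delivers the comparison with the crude factor $\sin^p\varepsilon$. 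To sharpen this to the announced $1-c_1\varepsilon^{1/2}$, I would split the bulk further at the intermediate threshold $\sqrt\varepsilon$: on the ``deep'' subset where $(v/\widetilde u)(x),(v/\widetilde u)(y)\in\mathcal{A}(\sqrt\varepsilon,\pi-\sqrt\varepsilon)\cup\mathcal{A}(\pi+\sqrt\varepsilon,2\pi-\sqrt\varepsilon)$, \rlemma{lem:elem} already produces $\sin^p\sqrt\varepsilon=1-O(\sqrt\varepsilon)$, while the thin annular strip of width $O(\sqrt\varepsilon)$ around $\pm 1$ is thin enough for its contribution to be transferred (via \eqref{eq:14}) from $\widetilde u$ to $u$ and absorbed into $c_1\varepsilon^{-p/2}E_p(u;(\so\setminus C_\varepsilon)\times\so)$. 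The residual pairs of $D_\varepsilon$ that mix $C_\varepsilon^\pm$ with $G_\varepsilon^\pm$, or $C_\varepsilon^+$ with $C_\varepsilon^-$, have at least one endpoint where $\phi$ is already at $\pm 1$, so $|\phi(x)-\phi(y)|$ reduces to a single $v$- or $\widetilde u$-increment that, again translated to $u$, is absorbed into $c_1\varepsilon^{p/2}E_p(u;\so\times\so)$.

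The main obstacle I anticipate is the bookkeeping of this two-scale decomposition: achieving the sharp $\varepsilon^{1/2}$ exponent in $(1-c_1\varepsilon^{1/2})$ requires a tight balancing between the intermediate scale $\sqrt\varepsilon$ and the weight $\varepsilon^{-p/2}$ in front of $E_p(u;(\so\setminus C_\varepsilon)\times\so)$, and care is needed not to double-count contributions from the different regions of $D_\varepsilon$. A secondary nontrivial point is the construction of the retraction $\pi$ at the critical fractional index $sp=1$: $\pi$ must be uniformly Lipschitz in $\varepsilon$ and constant on $\mathcal{A}[-\varepsilon,\varepsilon]\cup\mathcal{A}[\pi-\varepsilon,\pi+\varepsilon]$, since a purely piecewise-constant modification on $C_\varepsilon^\pm$ would introduce jumps that destroy $\wpp$-regularity at this critical index.
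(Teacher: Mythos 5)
Your outer skeleton does match the paper's: a Lipschitz degree-one collapsing map (the paper's $K_\delta$ of \eqref{eq:Keps}, with $\delta$ a fixed multiple of $\varepsilon$) produces a competitor of degree $d_2-d_1$ that is constant on $C_\varepsilon^{\pm}$, so that $\sigma_p^p(d_2-d_1)$ is estimated by an energy living on $D_\varepsilon$ only. But the heart of the lemma --- comparing the competitor's increments with $|(v-\wtu)(x)-(v-\wtu)(y)|$ at precision $1-c_1\varepsilon^{1/2}$ and with error terms involving only $u$ --- is not reached by your plan, for two reasons. First, the proposed sharpening rests on the false assertion that $\sin^p\sqrt{\varepsilon}=1-O(\sqrt{\varepsilon})$; in fact $\sin\sqrt{\varepsilon}=O(\sqrt{\varepsilon})$, and the constant $\sin^2(\cdot)$ in \rlemma{lem:elem} degenerates exactly near the arcs you collapse and never approaches $1$ unless $v/\wtu$ is near $\pm\im$, so no intermediate threshold can yield the factor $1-c_1\varepsilon^{1/2}$ along this route. (Moreover, \rlemma{lem:elem} requires both ratios to lie in the \emph{same} arc, so it cannot be applied on the union $G_\varepsilon^{+}\cup G_\varepsilon^{-}$ as you state it.) Second, comparing against $\phi_0=v\,\overline{\wtu}=\wtw$ unavoidably produces errors carrying increments of $\wtu$, not of $u$: from $v-\wtu=\wtu\,(\wtw-1)$ one only gets $|(v-\wtu)(x)-(v-\wtu)(y)|\ge |\wtw(x)-\wtw(y)|-|1-\wtw(y)|\,|\wtu(x)-\wtu(y)|$, and such terms cannot be ``transferred'' from $\wtu$ to $u$ via \eqref{eq:14}: that bound gives only $|\wtu(x)-\wtu(y)|\le|u(x)-u(y)|+2\varepsilon$, and the additive $2\varepsilon$ is not integrable against $|x-y|^{-2}$. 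This distinction is essential, since in the intended application $\wtu=f_n$ has energy blowing up with $n$ while $u=f$ is fixed; that is precisely why \eqref{eq:34} is stated with $E_p(u;\cdot)$.

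The paper's proof supplies the missing device: the competitor is built not from $\wtw$ but from the auxiliary map $W:=\overline{u}\,(v-\wtu)+1=w+(1-\wtu/u)$, which by \eqref{eq:14} satisfies $\big|\,|W|-1\big|\le\varepsilon$. The triangle inequality gives $|W(x)-W(y)|\le|(v-\wtu)(x)-(v-\wtu)(y)|+|1-\wtw(\cdot)|\,|u(x)-u(y)|$, an additive error with $u$-increments whose coefficient is $\le\varepsilon$ when one point lies in $C_\varepsilon^{+}$ and $\le 2$ otherwise; splitting $D_\varepsilon$ accordingly and applying $(a+b)^p\le(1+\eta)^p a^p+(1+1/\eta)^p b^p$ with $\eta=\sqrt{\varepsilon}$ produces exactly the three terms of \eqref{eq:34}, including the factor $1-c_1\varepsilon^{1/2}$. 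One then normalizes $\widetilde W:=W/|W|$ (degree $d_2-d_1$, with $|W(x)-W(y)|\ge(1-\varepsilon)|\widetilde W(x)-\widetilde W(y)|$), checks that $C_\varepsilon^{\pm}$ are contained in the sets where $\widetilde W$ lies in arcs of half-width $7\varepsilon$ about $\pm 1$, and takes $K_{7\varepsilon}\circ\widetilde W$ as the test map for $\sigma_p(d_2-d_1)$. Your map $\pi$ plays the role of $K_{7\varepsilon}$, but it is applied to the wrong map; without the $W$-construction (or an equivalent mechanism) both the $1-c_1\varepsilon^{1/2}$ factor and the $u$-only error terms are out of reach.
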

\begin{proof}
  Note first that \eqref{eq:14} implies that
  $\deg(\wtu)=\deg(u)=d_1$. Hence,
  setting 
  $w:=v/u=v\, {\overline u}$ and $\wtw:=v/\wtu$, we have  $\deg(\wtw)=
  \deg(w)=d_2-d_1$. Consider the map
 \begin{equation}
\label{eq:Wn}
   W:=\overline{u}(v-\wtu)+1=w+(1-\wtu/u).
 \end{equation}
Since
\begin{equation*}
  W(x)-W(y)={\overline u}(x)\{(v(x)-\wtu(x))-(v(y)-\wtu(y))\}+
   ({\overline u}(x)-{\overline u}(y))(v(y)-\wtu(y))\,,
\end{equation*}
the triangle inequality yields,
\begin{equation}
\label{eq:12}
 |W(x)-W(y)|\leq |(v(x)-\wtu(x))-(v(y)-\wtu(y))|+|1-\wtw(y)||u(x)-u(y)|.
\end{equation}
Interchanging between $x$ and $y$ gives 
\begin{equation}
\label{eq:16}
 |W(x)-W(y)|\leq |(v(x)-\wtu(x))-(v(y)-\wtu(y))|+|1-\wtw(x)||u(x)-u(y)|.
\end{equation}
By \eqref{eq:12}--\eqref{eq:16} we have
\begin{multline}
\label{eq:188}
|W(x)-W(y)|\leq
  |(v(x)-\wtu(x))-(v(y)-\wtu(y))|+2|u(x)-u(y)|,\\
(x,y)\in \so\times\so\,,
\end{multline}
and
\begin{multline}
 \label{eq:17}
  |W(x)-W(y)|\leq
  |(v(x)-\wtu(x))-(v(y)-\wtu(y))|+\varepsilon|u(x)-u(y)|,\\
 (x,y)\in
               (C_\varepsilon^{+}\times\so)\cup
               (\so\times C_\varepsilon^{+}).
\end{multline}
Note that by \eqref{eq:83} $D_\varepsilon$ can be written as a disjoint union, 
\begin{equation}
\label{eq:disjoint}
  D_\varepsilon= ((\so\setminus C_\varepsilon)\times\so)\cupdot(C_\varepsilon\times(\so\setminus
   C_\varepsilon))\cupdot  (C_\varepsilon^{+}\times C_\varepsilon^{-})\cupdot
               (C_\varepsilon^{-}\times C_\varepsilon^{+}).
\end{equation}
Next we will use the following elementary inequality:
\begin{equation}
  \label{eq:elementary}
  (a+b)^p\leq (1+\eta)^pa^p+(1+1/\eta)^pb^p\,,\quad\forall a,b,\eta,p>0\,.
\end{equation}
For the proof of \eqref{eq:elementary} it suffices to notice that $a+b\leq(1+\eta)a$ when $\eta a\ge b$, while  $a+b<(1+1/\eta)b$ when $\eta a< b$.
By  \eqref{eq:disjoint} and \eqref{eq:elementary}, applied to \eqref{eq:188}--\eqref{eq:17}  with $\eta=\sqrt{\varepsilon}$, we obtain
\begin{equation}
\label{eq:13}
E_p(v-\wtu;D_\varepsilon)\geq  \frac{E_p(W;D_\varepsilon)}{(1+\sqrt{\varepsilon})^p}
-
2(2/\sqrt{\varepsilon})^p
E_p\big(u;\so\times(\so\setminus C_\varepsilon)\big)
-2\varepsilon^{p/2}E_p(u; C_\varepsilon^{+}\times C_\varepsilon^{-}).
\end{equation}
\par By \eqref{eq:14}, $|W-w|=|1-\wtu/u|=|u-\wtu|\leq\varepsilon$ in
 $\so$. Hence
\begin{equation}
  \label{eq:60}
 \big||W|-1\big|\leq |W-w|\leq \varepsilon~\text{ in }\so,
\end{equation}
 and also
 \begin{equation}
   \label{eq:63}
|\wtw-w|=|\wtu-u|\leq \varepsilon~\text{ in }\so.
 \end{equation}
Consider the map $\widetilde W:=W/|W|$,  which thanks to
\eqref{eq:60}  belongs to
$\wpp(\so;\so)$. Furthermore, again by \eqref{eq:60},
\begin{equation}
\label{eq:65}
  |\widetilde W-w|\leq |\widetilde W-W|+|W-w|\leq 2\varepsilon\ \text{in }\so,
\end{equation}
implying in particular that 
 \begin{equation}
\label{eq:62}
\deg(\widetilde W)=d_2-d_1.
\end{equation}
Combining \eqref{eq:65} with \eqref{eq:63} yields
 \begin{equation}
\label{eq:64}
   |\widetilde W-\wtw|\leq 3\varepsilon\ \text{and}\  d_{\so}(\widetilde W,\wtw)\leq 6\varepsilon\ \text{in }\so.
 \end{equation}
 From \eqref{eq:60} we get in particular that $|W|\geq 1-\varepsilon$,
 whence, 
using  the identity
\begin{equation*}
  |z_1-z_2|^2=(|z_1|-|z_2|)^2+|z_1|\cdot|z_2|\left|\frac{z_1}{|z_1|}-\frac{z_2}{|z_2|}\right|^2\,,\quad\forall z_1,z_2\in\mathbb{C}-\{0\}\,,
\end{equation*}
we get that
\begin{equation}
  \label{eq:144}
  |W(x)-W(y)| \geq (1-\varepsilon)|\widetilde W(x)-\widetilde
  W(y)|,\quad\forall x,y\in\so.
\end{equation}
Plugging \eqref{eq:144} in \eqref{eq:13} yields  
\begin{equation}
\label{eq:61}
\begin{aligned}
E_p(v-\wtu;D_\varepsilon)\geq  \Big(\frac{1-\varepsilon}{1+\sqrt{\varepsilon}}\Big)^pE_p(\widetilde W;D_\varepsilon)
&-
2(2/\sqrt{\varepsilon})^p
E_p(u;\so\times(\so\setminus C_\varepsilon))\\
&-2\varepsilon^{p/2}E_p(u; C_\varepsilon^{+}\times C_\varepsilon^{-}).
\end{aligned}
\end{equation}
By \eqref{eq:64} and \eqref{eq:83} we have 
\begin{equation}
  \label{eq:67}
\begin{aligned}
 C_\varepsilon^{+}&\subset \widetilde C_{\varepsilon}^{+}:=\{x\in\so;\,\widetilde
 W(x)\in\mathcal{A}[-7\varepsilon,7\varepsilon]\}\\
C_\varepsilon^{-}&\subset \widetilde C_{\varepsilon}^{-}:=\{x\in\so;\,\widetilde
 W(x)\in\mathcal{A}[\pi-7\varepsilon,\pi+7\varepsilon]\}
\end{aligned}
.
\end{equation}
Therefore,
\begin{equation}
  \label{eq:19}
  \widetilde D_\varepsilon:=\so\times\so\setminus\big( (\widetilde C_\varepsilon^{+}\times
  \widetilde C_\varepsilon^{+})\cup (\widetilde C_\varepsilon^{-}\times
  \widetilde C_\varepsilon^{-})\big)\subset D_\varepsilon.
\end{equation}
For each $\delta\in(0,\pi/2)$ we define (as in \cite{bms_w11})  the
map $K_\delta:\so\to\so$ by $K_\delta(e^{\im \theta})=e^{\im
  k_\delta(\theta)}$ where $k_\delta:[0,2\pi]\to[0,2\pi]$ is given by
\begin{equation}
\label{eq:Keps}
k_\delta(\theta):=\begin{cases}
0,&\text{if }\theta \in(0,\delta)\cup[2\pi-\delta,2\pi]\\
\pi(\theta-\delta)/(\pi-2\delta),&\text{if } \theta\in(\delta,\pi-\delta)\\
\pi, &\text{if } \theta\in[\pi-\delta,\pi+\delta]\\
\pi+\pi(\theta-\pi-\delta)/(\pi-2\delta),& \text{if }\theta\in[\pi+\delta,2\pi-\delta)\\
\end{cases}.
 \end{equation} 
Clearly $K_\delta\in\text{Lip}(\so;\so)$  and
$\deg(K_\delta)=1$. Since 
$\|k'_\delta\|_\infty=\pi/(\pi-2\delta)$  we have by \rlemma{lem:lip},
\begin{equation*}
  \left|K_\delta(e^{\im \theta_2})-K_\delta(e^{\im
      \theta_1})\right|\leq \left(\frac{\pi}{\pi-2\delta}\right)\left|e^{\im \theta_2}-e^{\im
      \theta_1})\right|\,,\quad \forall \theta_1,\theta_2\in[0,2\pi].
\end{equation*}
Therefore, $w_1:=K_{7\varepsilon}\circ\widetilde
W$ satisfies 
   $\deg(w_1)=\deg(\widetilde W)=d_2-d_1$ and 
   \begin{equation}
     \label{eq:15}
     |w_1(x)-w_1(y)|\leq
                 \left(\frac{\pi}{\pi-14\varepsilon}\right)|\widetilde W(x)-\widetilde W(y)|,~\forall x,y\in\so. 
   \end{equation}
By definition of $\sigma_p$, \eqref{eq:15} and the definition of $K_{7\varepsilon}$ (see
\eqref{eq:Keps}) it follows, using also \eqref{eq:19} and the fact that $w_1$ is constant on $\widetilde C_\varepsilon^{+}$ and $\widetilde C_\varepsilon^{-}$, that
\begin{multline}
  \label{eq:177}
  \sigma_p^p(d_2-d_1)\leq E_p(w_1;\so\times \so)=E_p(w_1;\widetilde D_\varepsilon)\\\leq
  \left(\frac{\pi}{\pi-14\varepsilon}\right)^p E_p(\widetilde W;
  \widetilde D_\varepsilon)\leq 
  \left(\frac{\pi}{\pi-14\varepsilon}\right)^p E_p(\widetilde W;
  D_\varepsilon).
\end{multline}
Plugging  \eqref{eq:177} in \eqref{eq:61} yields \eqref{eq:34}, for
large enough $c_1$.
\end{proof}

\section{Proof of \rth{th:main}}
\label{sec:proofs}
We begin with the upper bound for $\Dist_{\wop}$:
\begin{proposition}
  \label{prop:upper} 
For every $d_1,d_2\in\Z$ we have
\begin{equation}
  \label{eq:31}
  \Dist_{\wop}(\mathcal{E}_{d_1},\mathcal{E}_{d_2})\leq \sigma_p(d_2-d_1).
\end{equation}
\end{proposition}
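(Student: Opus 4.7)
The plan is to construct, for every $f\in\mathcal{E}_{d_1}$ and every $\varepsilon>0$, a competitor $g\in\mathcal{E}_{d_2}$ with $|f-g|_{\wop}^p\leq \sigma_p^p(d_2-d_1)+\varepsilon$. Set $k:=d_2-d_1$; if $k=0$ take $g=f$, so assume $k\ne 0$. First I would fix a smooth near-minimizer $h\in C^\infty(\so;\so)\cap\mathcal{E}_k$ with $h(1)=1$ and $|h|_{\wop}^p\leq \sigma_p^p(k)+\varepsilon/2$. Then, for each small $\delta>0$, I would localize $h$ by setting $h_\delta:=h\circ\phi_\delta$, where $\phi_\delta:\so\to\so$ is a smooth degree-one map that equals $1$ outside an arc $I_\delta\subset\so$ of length $\delta$ and wraps uniformly once around $\so$ on $I_\delta$. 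Then $h_\delta\in\mathcal{E}_k$ and $h_\delta\equiv 1$ off $I_\delta$. The competitor is $g:=f\cdot h_\delta\in\mathcal{E}_{d_2}$, by additivity of the degree under pointwise products of $\so$-valued maps.

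To estimate $|f-g|_{\wop}^p=|f(1-h_\delta)|_{\wop}^p$, use that $(f-g)(x)=f(x)(1-h_\delta(x))$ vanishes off $I_\delta$ to write
\begin{equation*}
|f-g|_{\wop}^p = E_p(f-g;I_\delta\times I_\delta) + 2\,E_p(f-g;I_\delta\times I_\delta^c).
\end{equation*}
On the diagonal block, using the identity
\begin{equation*}
f(x)(1-h_\delta(x))-f(y)(1-h_\delta(y)) = (1-h_\delta(x))(f(x)-f(y)) + f(y)(h_\delta(y)-h_\delta(x))
\end{equation*}
with the convexity inequality $(a+b)^p\leq(1+\eta)^p a^p+C_\eta b^p$ (as in \eqref{eq:elementary}) yields
\begin{equation*}
E_p(f-g;I_\delta\times I_\delta) \leq (1+\eta)^p\,E_p(h_\delta;I_\delta\times I_\delta) + 2^p C_\eta\,E_p(f;I_\delta\times I_\delta).
\end{equation*}
The change of variables induced by the uniform wrap $\phi_\delta$, combined with the scale-invariance of the $\wop$ seminorm in dimension one, gives the identity $E_p(h_\delta;I_\delta\times I_\delta)=|h|_{\wop}^p$; meanwhile $E_p(f;I_\delta\times I_\delta)\to 0$ as $\delta\to 0$ by absolute continuity of the Gagliardo integrand.

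The delicate piece is the cross block. Since $(f-g)(y)=0$ for $y\in I_\delta^c$, the contribution reduces to
\begin{equation*}
E_p(f-g;I_\delta\times I_\delta^c) = \int_{I_\delta}|1-h_\delta(x)|^p\,\Bigl(\int_{I_\delta^c}\frac{dy}{|x-y|^2}\Bigr)dx.
\end{equation*}
For a bare uniform wrap, the substitution $s=2\pi(x+\delta/2)/\delta$ evaluates this to $2\pi\int_0^{2\pi}|h(e^{\im s})-1|^p/(s(2\pi-s))\,ds$, a positive constant independent of $\delta$. The main obstacle is therefore to tame this cross contribution while preserving the scale-invariance identity on the interior. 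This is the "slight modification" of \cite[Thm.~3, item 2]{bms_coron} alluded to in the introduction: one refines $\phi_\delta$ to be identically $1$ on thin boundary strips of $I_\delta$ (so that $|1-h_\delta|$ vanishes there) and wraps on the remaining middle sub-arc, then balances the widths so that both the cross contribution and the induced perturbation of the interior estimate become $o_\delta(1)$. The replacement of a single degree-$k$ bubble for the $|k|$ degree-one bubbles used in \cite{bms_coron} is precisely what upgrades the crude bound $c_2(p)|k|^{1/p}$ to the sharp $\sigma_p(k)$.

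Combining the interior and cross estimates and letting $\delta\to 0$, $\eta\to 0$ in the right order produces $|f-g|_{\wop}^p \leq \sigma_p^p(d_2-d_1) + \varepsilon$. Since $\varepsilon$ was arbitrary and $f\in\mathcal{E}_{d_1}$ was arbitrary, this establishes \eqref{eq:31}.
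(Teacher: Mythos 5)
Your reduction of the problem to the cross term is correct, and your computation showing that for a bare uniform wrap the quantity $\int_{I_\delta}|1-h_\delta(x)|^p\bigl(\int_{I_\delta^c}|x-y|^{-2}\,dy\bigr)dx$ is a $\delta$-independent positive constant is exactly the heart of the difficulty. But the proposed cure does not work: a $\phi_\delta$ that is identically $1$ on two boundary strips and wraps uniformly on the middle sub-arc $I'$ produces nothing new — $h\circ\phi_\delta$ is again a bare uniform wrap, just on the shorter arc $I'$. By the very scale invariance you invoke, the offending edge contribution is the same constant for every arc length; it merely migrates between your \enquote{diagonal} and \enquote{cross} blocks according to whether you slice along $\partial I_\delta$ or $\partial I'$, so no balancing of widths makes it $o_\delta(1)$. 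What your construction yields in the limit is $|h_\delta-1|^p_{\wop}=|h_\delta|^p_{\wop}$, the energy of a \emph{uniformly} concentrated copy of $h$, and there is no reason this is $\le\sigma_p^p(d_2-d_1)+\varepsilon$: uniform concentration is not an energy-preserving operation for the $\wop$ seminorm. (Relatedly, the claimed identity $E_p(h_\delta;I_\delta\times I_\delta)=|h|^p_{\wop}$ is false — unrolling the circle onto an arc replaces the kernel $|e^{\im s}-e^{\im t}|^{-2}$ by essentially $|s-t|^{-2}$, giving only an asymptotic inequality $\le$; that error is in the harmless direction, but it signals that the seminorm is not preserved by this kind of reparametrization.)

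The paper removes both the interior error and the cross term in one stroke, and this is the ingredient you are missing. First, using \cite[Lemma~2.2]{bms_coron}, $f$ itself is replaced (without loss of generality) by a map of the same class with $f\equiv1$ on a fixed arc $\mathcal{A}(\pi-\delta,\pi+\delta)$. Second, the near-minimizer $h\in\mathcal{E}_{d_2-d_1}$, taken $\equiv1$ near $1$, is concentrated not by a linear wrap but by composition with M\"obius self-maps $\mathcal{M}_n$ of $\so$: by the conformal invariance of the $\wop$ seminorm \eqref{eq:mobius} (see \cite{pm}), $h_n=h\circ\mathcal{M}_n$ has \emph{exactly} the energy of $h$ while $\{h_n\neq1\}\subset\mathcal{A}(\pi-1/n,\pi+1/n)$. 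Then $g_n=fh_n\in\mathcal{E}_{d_2}$ and, for $n>1/\delta$, $g_n-f=f(h_n-1)=h_n-1$ identically on $\so$, so $|f-g_n|^p_{\wop}=|h_n|^p_{\wop}=|h|^p_{\wop}\le\sigma_p^p(d_2-d_1)+\varepsilon$ with no error terms at all; in particular the interaction between the small arc and the rest of the circle is absorbed into an exactly conserved energy. The M\"obius reparametrization is precisely the strongly non-uniform concentration profile (slow near the endpoints of the arc) that a piecewise-linear wrap with constant strips cannot emulate. To salvage your scheme you would have to either quote this invariance or otherwise construct maps $h_\delta\in\mathcal{E}_{d_2-d_1}$ supported (modulo the value $1$) in arcs shrinking to a point with $\limsup_{\delta\to0}|h_\delta|^p_{\wop}\le\sigma_p^p(d_2-d_1)$ — which is exactly the step your argument leaves open.
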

\begin{proof}
Let $f\in\mathcal{E}_{d_1}$ and $\varepsilon>0$ be given. We need to
prove the existence of $g\in\mathcal{E}_{d_2}$ satisfying
\begin{equation}
  \label{eq:32}
  |f-g|_{\wop}^p\leq \sigma_p^p(d_1-d_2)+\varepsilon.
\end{equation}
By \cite[Lemma~2.2]{bms_coron} every map in $\wop(\so;\so)$ can be
approximated by a sequence $\{f_n\}\subset C^\infty(\so;\so)$ such
that each $f_n$ is constant near some point. Therefore, without loss
of generality we may assume that the given $f$ satisfies  $f\equiv 1$
in $\mathcal{A}(\pi-\delta,\pi+\delta)$ for some small $\delta>0$.
 By definition of $\sigma_p(d_2-d_1)$ there exists
 $h\in\mathcal{E}_{d_2-d_1}$ satisfying
 \begin{equation}
   \label{eq:33}
   |h|_{\wop}^p\leq \sigma_p^p(d_2-d_1)+\varepsilon.
 \end{equation}
 By the density result mentioned above, we may assume that
 $h\equiv 1$ in $\mathcal{A}(-\eta,\eta)$, for some small $\eta>0$. 
 Next we invoke the invariance of $|\cdot|_{W^{1/p,p}}$ with respect to
 M\" obius transformations $\mathcal{M}$ that send $\so$ to itself (see
 \cite{pm}) to get that
 \begin{equation}
   \label{eq:mobius}
  |h|_{W^{1/p,p}}=|h\circ \mathcal{M}|_{W^{1/p,p}}.
 \end{equation}
 For each $n\geq 1$ let $\mathcal{M}_n$ be the unique M\" obius transformation
 that sends the ordered triple (with respect to the
positive orientation on $\so$) 
 $(e^{\im(\pi+1/n)},1,e^{\im(\pi-1/n)})$ to the ordered triple
 $(e^{-\im\eta},1,e^{\im\eta})$. Hence $\mathcal{M}_n$ is a self
 map of $\so$ satisfying 
 $\mathcal{M}_n(\mathcal{A}(\pi+1/n,3\pi-1/n))=\mathcal{A}(-\eta,\eta)$.
 Set
 $h_n=h\circ \mathcal{M}_n$. Clearly $\deg h_n=\deg h=d_2-d_1$ and by
 \eqref{eq:mobius} and \eqref{eq:33}, for each $n$,
 \begin{equation}
\label{eq:mobn}
   |h_n|_{W^{1/p,p}}^p=
   |h|_{W^{1/p,p}}^p\leq \sigma_p^p(d_2-d_1)+\varepsilon ~\text{ and }~\{x\in\so\,;\,h_n(x)\neq 1\}\subset
    \mathcal{A}(\pi-1/n,\pi+1/n)\,.
 \end{equation}
  For every $n$ set $g_n=fh_n\in\mathcal{E}_{d_2}$. By construction it
  is clear that for  $n>1/\delta$ we have $g_n-f=f(h_n-1)=h_n-1$ on
  $\so$. Therefore, \eqref{eq:32} holds with $g=g_n$ for such $n$. 
\end{proof}
The main ingredient in the proof of the lower bound for $\Dist_{\wop}$
is \rprop{prop:Dist}.
\begin{proof}[Proof of \rprop{prop:Dist}]
 Clearly it suffices to consider $d_2\neq d_1$ with $d_1>0$. Let a small $\varepsilon>0$ be given. In view of the upper bound of
 \rprop{prop:upper}, it suffices to show that there exists
 $N(\varepsilon)$ such that (for every sufficiently small $\varepsilon$):
 \begin{equation}
   \label{eq:35}
   |f_n-g|_{\wop}^p\geq
  \sigma_p^p(d_2-d_1)-\varepsilon^{1/3},\quad\forall
   g\in\mathcal{E}_{d_2},\,\forall n\ge N(\varepsilon).
 \end{equation}
Fix any $g\in\mathcal{E}_{d_2}$. By density of smooth maps in
$\wop(\so;\so)$ we may assume that $g\in C^{\infty}(\so;\so)$. Clearly
it suffices to consider $n$ for which
\begin{equation}
  \label{eq:2}
  |f_n-g|_{\wop}^p\leq \sigma_p^p(d_2-d_1).
\end{equation}

  Consider the map
  \begin{equation}
 \label{eq:Hn}
    H_n:=\bar{f}(g-f_n)+1=h+(1-\bar{f}f_n)\,.
  \end{equation}
 Put
 $N_1(\varepsilon):=\left[(\pi/\varepsilon)^{1/(1-\alpha)}\right]+1$. By
 \eqref{eq:useful} we deduce that
 \begin{equation}
   \label{eq:37}
   |f_n-f|\leq\varepsilon \text{ on }\so,~\forall n\geq N_1(\varepsilon).
 \end{equation}
 For such $n$ we may apply
 \rlemma{lem:main}  with $u=f,\wtu=f_n$ and $v=g$ to get that
\begin{equation}
\label{eq:36}   
|g-f_n|_{\wop}^p\geq 
(1-c_1\ve^{1/2}) \sigma^p_p(d_2-d_1)- c_1\varepsilon^{-p/2}
E_p(f;(\so\setminus C_\varepsilon^{(n)})\times\so)
 -c_1 \gamma_{d_1}\varepsilon^{p/2},
 \end{equation}
where  for each $d\in\Z$ we denote 
\begin{equation}
  \label{eq:44}
  \gamma_{d}:=|z^{d}|^p_{\wop}\,,
\end{equation}
and where
\begin{equation*}
  C_\varepsilon^{(n)}=\{x\in\so\,;\,(\bar f_ng)(x)\in
  \mathcal{A}[-\varepsilon,\varepsilon]\cup \mathcal{A}[\pi-\varepsilon,\pi+\varepsilon]\}.
\end{equation*}

\par In order to conclude via \eqref{eq:36} we need to bound the term
$E_p(f;(\so\setminus C_\varepsilon^{(n)})\times\so)$. We claim that
there exists $C=C(p,d_1,d_2)$ such that for some $\beta>0$ there holds
\begin{equation}
  \label{eq:claim}
  E_p(f;(\so\setminus C_\varepsilon^{(n)})\times\so)\leq\frac{C}{\varepsilon}n^{-\beta}.
\end{equation}
We may write 
$\so\setminus C_\varepsilon^{(n)}=A_{\varepsilon,+}^{(n)}\cup
A_{\varepsilon,-}^{(n)}$ where
\begin{equation*}
  A_{\varepsilon,+}^{(n)}=\{x\in\so\,;\,(\bar f_ng)(x)\in
  \mathcal{A}(\varepsilon,\pi-\varepsilon)\}~\text{ and }A_{\varepsilon,-}^{(n)}=\{x\in\so\,;\,(\bar f_ng)(x)\in
  \mathcal{A}(\pi+\varepsilon,2\pi-\varepsilon)\}.
\end{equation*}
 Next  we write $\so$ as a disjoint union of the $2nd_1$ arcs given by 
 \begin{equation*}
 \tilde I_k=\mathcal{A}\Big(\frac{k\pi}{nd_1}, \frac{(k+1)\pi}{nd_1}\Big]\,,~k=0,1,\dots,2nd_1-1.
 \end{equation*}
 By the definition of $f_n$ we have (for large $n$) for all $x\ne y$ in $\tilde I_k$:
 \begin{equation}
 \label{eq:Ism}
 \frac{d_{\so}(f_n(x),f_n(y))}{d_\so(x,y)}=\begin{cases}
 n^\alpha d_1 & k\text{ is even}\\
 (n^\alpha-2)d_1 & k\text{ is odd}
 \end{cases}
 .
 \end{equation}
 We use these arcs to write
 $A_{\varepsilon,+}^{(n)}=\displaystyle\bigcup_{k=0}^{2nd_1-1} J_{k,+}$ where
 $J_{k,+}=A_{\varepsilon,+}^{(n)}\cap \tilde I_k$.
Using  the following  basic relation between the geodesic and
  Euclidean distances in $\so$,
\begin{equation}
  \label{eq:9}
  \Big(\frac{2}{\pi}\Big)d_{\so}(x,y)\leq |x-y|\leq d_{\so}(x,y),\quad\forall x,y\in\so\,,
\end{equation}
we deduce from \eqref{eq:Ism}
that
\begin{equation}
\label{eq:fnxy}
\frac{|f_n(x)-f_n(y)|^p}{|x-y|^{2}}\geq C_1n^{\alpha p} |x-y|^{p-2}\,,~\text{ for all } x\neq y\text{ in } J_{k,+},
\end{equation}
for some constant
$C_1=C_1(p,d_1)$. Applying \eqref{eq:elem} with
$z_1=f_n(x),z_2=f_n(y),w_1=g(x)$ and $w_2=g(y)$ to the L.H.S.~of \eqref{eq:fnxy}, and then  integrating over $J_{k,+}\times J_{k,+}$  yields
\begin{equation}
  \label{eq:6}
  \begin{aligned}
  \iint_{J_{k,+}\times J_{k,+}}\!\!\!\!\frac{|(f_n(x)-g(x))-(f_n(y)-g(y))|^p}{|x-y|^{2}}&\,dx\,dy\geq
  C_1(\sin^p\varepsilon) n^{\alpha p} \iint_{J_{k,+}\times J_{k,+}}\!\!\!|x-y|^{p-2}\,dx\,dy ,\\
&\quad k=0,1,\ldots,2nd_1-1\,.
\end{aligned}
\end{equation}
Next, we can also write $A_{\varepsilon,-}^{(n)}=\displaystyle\bigcup_{k=0}^{2nd_1-1} J_{k,-}$ where 
$\{ J_{k,-}\}_{k=0}^{2nd_1-1}$
 are defined analogously to $\{ J_{k,+}\}_{k=0}^{2nd_1-1}$.
The same computation that led to \eqref{eq:6} gives 
\begin{equation}
  \label{eq:8}
  \begin{aligned}
   \iint_{J_{k,-}\times J_{k,-}}\!\!\!\!\frac{|(f_n(x)-g(x))-(f_n(y)-g(y))|^p}{|x-y|^{2}}&\,dx\,dy\geq
  C_1(\sin^p\varepsilon) n^{\alpha p} \iint_{J_{k,-}\times J_{k,-}}\!\!\!|x-y|^{p-2}\,dx\,dy ,\\
  &\quad k=0,1,\ldots,2nd_1-1\,.
  \end{aligned}
\end{equation}
 Summing over all indices in \eqref{eq:6}--\eqref{eq:8} and taking
 into account \eqref{eq:2} yields
 \begin{equation}
   \label{eq:38}
   \sum_{k=0}^{2nd_1-1}\iint_{J_{k,-}\times J_{k,-}}\!
   |x-y|^{p-2}\,dx\,dy+\sum_{k=0}^{2nd_1-1}\iint_{J_{k,+}\times J_{k,+}}\!
   |x-y|^{p-2}\,dx\,dy\leq \frac{C_2}{(n^\alpha \sin\varepsilon)^p}\,.
 \end{equation}
 Next we treat separately   the cases $p\geq 2$ and $1<p<2$.\\[2mm]
\underline{Case I: $p\geq 2$}\\[2mm]
The key tool in treating this case is the following elementary inequality: 
 \begin{equation}\label{eq:IAB}
\iint_{A\times A} |x-y|^a\,dx\,dy\geq \kappa_a |A|^{a+2}\,,~\forall A\subset\so,\,\forall a\ge0,
\end{equation}
for some constant $\kappa_a>0$.
[Obviously we consider only measurable subsets of $\so$ and $|A|$  denotes the one dimensional  Hausdorff measure of  $A$].
To verify \eqref{eq:IAB} we first note that for any measurable  set $A\subset\R$ the set  
$$
B:=\{x\in A\,;\, |x|\geq |A|/4\}\,,
$$
satisfies $|B|\geq |A|/2$ (here $|C|$ stands for the Lebesgue measure of  $C\subset\R$). It follows that 
\begin{equation}\label{eq:AB}
\int_A |x|^a\,dx\geq \int_B |x|^a\,dx\geq |B| (|A|/4)^a\geq \tilde c_a |A|^{a+1},~\forall a\geq 0.
\end{equation}
Since \eqref{eq:AB} is clearly invariant w.r.t~translations, we deduce that also 
\begin{equation*}
\int_A |x-y|^a\,dx\geq   \tilde c_a |A|^{a+1}\,,~\forall A\subset\R,\,\forall y\in\R,\,\forall a\ge0\,,
\end{equation*}
and an additional integration yields
\begin{equation}
\label{eq:onR}
\iint_{A\times A} |x-y|^a\,dx\,dy\geq   \tilde c_a |A|^{a+2}\,,~\forall A\subset\R,\,\forall a\ge0\,.
\end{equation}
 Switching  from $\so$ to $\R$, using \eqref{eq:9}, enables us to deduce \eqref{eq:IAB} from \eqref{eq:onR}.
\par Applying \eqref{eq:IAB} to $A=J_{k,\pm}$ and $a=p-2$ gives
\begin{equation}
  \label{eq:39}
  \iint_{J_{k,\pm}\times J_{k,\pm}}
   |x-y|^{p-2}\,dx\,dy\geq \kappa_{p-2}|J_{k,\pm}|^p\,.
\end{equation}
 Plugging   \eqref{eq:39} in \eqref{eq:38} yields 
 \begin{equation}
   \label{eq:40}
   \sum_{k=0}^{2nd_1-1}  \left(|J_{k,+}|^p+|J_{k,-}|^p\right)\leq \frac{C_3}{(n^\alpha \sin\varepsilon)^p}\,.
 \end{equation}
By H\" older inequality and \eqref{eq:40} we obtain,
\begin{equation}
  \label{eq:41}
  \big|\so\setminus C_\varepsilon^{(n)}\big|=\sum_{k=0}^{2nd_1-1}
  \left(|J_{k,+}|+|J_{k,-}|\right)\leq (4nd_1)^{1-1/p}
  \frac{C_3^{1/p}}{n^\alpha\sin\varepsilon}\leq
  \frac{C_4}{\varepsilon} n^{1-1/p-\alpha}\,.
\end{equation}
Finally, by \eqref{eq:41} we get
\begin{equation}
  \label{eq:42}
  E_p(f;(\so\setminus C_\varepsilon^{(n)})\times\so)\leq
  2\pi\left|\so\setminus
  C_\varepsilon^{(n)}\right|\sup_{\substack{x,y\in\so\\x\ne y}}\frac{|f(x)-f(y)|^p}{|x-y|^2}\leq \frac{C_5}{\varepsilon} n^{1-1/p-\alpha},
\end{equation}
 which gives \eqref{eq:claim} with $\beta=\alpha-(1-1/p)>0$ (by \eqref{eq:alpha}).\\[2mm]
\underline{Case II: $1<p<2$}\\[2mm]
Treating this case requires another elementary inequality, namely,  
 \begin{equation}\label{eq:ineq-beta}
\iint_{A\times A} \frac{dx\,dy}{|x-y|^b}\geq \lambda_b\left( \iint_{A\times \so} \frac{dx\,dy}{|x-y|^b}\right)^2\,,~\forall A\subset\so,\,\forall b\in(0,1),
\end{equation}
for some $\lambda_b>0$.
To confirm \eqref{eq:ineq-beta} we first notice that $\int_{\so}\frac{dy}{|x-y|^b}:=\eta=\eta(b),\,\forall x\in\so$, and thus 
\begin{equation}\label{eq:333}
\iint_{A\times\so}\frac{dx\,dy}{|x-y|^b}=\eta|A|\,,\text{ for every measurable $A\subset\so$}.
\end{equation}
Finally, by \eqref{eq:333}
\begin{equation*}
\iint_{A\times A}\frac{dx\,dy}{|x-y|^b} \geq \frac{1}{2^b}|A|^2=\frac{1}{2^b\eta^2}\left(\iint_{A\times\so}\frac{dx\,dy}{|x-y|^b}\right)^2\,,
\end{equation*}
and \eqref{eq:ineq-beta} follows with $\lambda_b=\frac{1}{2^b\eta^2}$.
\par Next we turn to the proof of \eqref{eq:claim} in this case.  Clearly
\begin{equation}
\label{eq:46}
E_p(f;(\so\setminus C_\varepsilon^{(n)})\times\so)\leq C_6
\sum_{k=0}^{2nd_1-1}\left(\iint_{J_{k,-}\times \so}\!\!
|x-y|^{p-2}\,dx\,dy+\iint_{J_{k,+}\times\so}\!\!
|x-y|^{p-2}\,dx\,dy\right)\,.
\end{equation}
Applying the Cauchy-Schwarz inequality to \eqref{eq:46} and using
\eqref{eq:ineq-beta} (with $A=J_{k,\pm}$ and $b=2-p$) and \eqref{eq:38} yields
\begin{equation*}
  E_p(f;(\so\setminus C_\varepsilon^{(n)})\times\so)\leq
  C_6(4nd_1)^{1/2}\frac{C_2^{1/2}}{\lambda_{2-p}^{1/2}(n^\alpha\sin\varepsilon)^{p/2}}\leq
  \frac{C_7}{\varepsilon}n^{(1-\alpha p)/2}\,,
\end{equation*}
 and \eqref{eq:claim} follows in this case as well, with
 $\beta=(\alpha p-1)/2>0$ (see \eqref{eq:alpha}).
\par Choosing
$N(\varepsilon)\geq N_1(\varepsilon)$ (see
\eqref{eq:37}) such that, in addition, 
\begin{equation*}
  Cn^{-\beta}\leq \varepsilon^{1+p},~\forall n\geq N(\varepsilon),
\end{equation*}
we get from \eqref{eq:36} and \eqref{eq:claim} that for $n\geq
N
(\varepsilon)$ there holds,
\begin{equation*}
|g-f_n|_{\wop}^p\geq
(1-c_1\varepsilon^{1/2})\sigma^p_p(d_2-d_1)-c_1\ve^{p/2}(1+\gamma_{d_1})
\geq \sigma^p_p(d_2-d_1)-\varepsilon^{1/3}\,,
 \end{equation*}
 for $\varepsilon$ sufficiently small (using $p/2>1/2$), and \eqref{eq:35} follows.
\end{proof}
 We can now give the proof of our main result \rth{th:main}.
\begin{proof}[Proof of \rth{th:main}] In view of \eqref{eq:31} of
  \rprop{prop:upper}, it suffices to prove that 
  \begin{equation}
\label{eq:5}
\Dist_{\wop}(\mathcal{E}_{d_1},\mathcal{E}_{d_2})\geq
      \sigma_p(d_2-d_1),\quad\forall d_1,d_2\in\Z.
  \end{equation}
In case $d_1\neq 0$, \eqref{eq:5} follows from \rprop{prop:Dist}. In
the remaining (easy) case $d_1=0$, we can take the constant function
$f=1$ that satisfies $d_{\wop}^p(f,g)=|g|_{\wop}^p\geq
\sigma_p^p(d_2)$ for all $g\in\mathcal{E}_{d_2}$.
\end{proof}

\end{document}